%
%

%
%



\documentclass[12pt]{amsart}
\usepackage{fullpage}

\usepackage[colorlinks=true,urlcolor=blue,
bookmarks=true,bookmarksopen=true,citecolor=blue
]{hyperref}

\usepackage{amsmath, amsfonts, amssymb}      
\usepackage{color} 
\usepackage[all]{xy}
\usepackage{arydshln}
\usepackage{blkarray}
\usepackage{graphicx}

\setcounter{tocdepth}{1}
%

\theoremstyle{plain}
\newtheorem{mainthm}{Theorem}
\setcounter{mainthm}{0}

\newtheorem{thm}{Theorem}[subsection]

\newtheorem{cor}[thm]{Corollary}

\newtheorem{lem}[thm]{Lemma}
\newtheorem{prop}[thm]{Proposition}
\newtheorem{cnj}{Conjecture}

\theoremstyle{definition}
\newtheorem{dfn}[thm]{Definition}

\theoremstyle{remark}
\newtheorem{rem}[thm]{Remark}
\newtheorem*{remnonum}{Remark}
\newtheorem{rems}[thm]{Remarks}
\newtheorem*{remsnonum}{Remarks}

\newtheorem*{exsnonum}{Examples}
\newtheorem{qs}[thm]{Question}

\newtheorem*{qssnonum}{Questions}

\theoremstyle{plain}

%

%

%
%
%
%
\newcommand{\R}{\mathbb{R}}
\newcommand{\Z}{\mathbb{Z}}

\newcommand{\C}{\mathbb{C}}
\newcommand{\F}{\mathbb{F}}
\newcommand{\K}{\overline{K}_1} 

\newcommand{\Crit}{\textnormal{Crit\/}}
\newcommand{\tor}{\text{Tor}}

\newcommand{\Ccal}{\mathcal{C}}

%
%
\newcommand{\fcaddress}{francois.charette8@gmail.com}

\begin{document}

\title[Quantum Reidemeister torsion]{Quantum Reidemeister torsion, open Gromov-Witten invariants and a spectral sequence of Oh}
\date{\today}

\author{Fran\c{c}ois Charette}
\thanks{The author was supported by ETH Z\"{u}rich, and an MPIM Bonn fellowship}

\address{Fran\c{c}ois Charette, University of Ottawa, Canada} \email{\fcaddress}

%

\begin{abstract}
  We adapt classical Reidemeister torsion to monotone Lagrangian submanifolds using the pearl complex of Biran and Cornea.  The definition involves generic choices of data and we identify a class of Lagrangians for which this torsion is invariant and can be computed in terms of genus zero open Gromov-Witten invariants.  This class is defined by a vanishing property of a spectral sequence of Oh in Lagrangian Floer theory.
\end{abstract}

\maketitle

%
%

\section{Introduction and results}
Lagrangian quantum homology $QH(L)$ is an invariant associated to a closed monotone Lagrangian submanifold in a tame symplectic manifold $(M, \omega)$, see \cite{Bi-Co:rigidity}.  This invariant can vanish, for example if the Lagrangian can be displaced from itself by a Hamiltonian isotopy, as is the case for any Lagrangians in $\C^n$, in which case $L$ is called narrow.  A variant of this construction associates a quantum homology to every field representation $\varphi\colon H_2(M, L) \to \mathbb{F}^\times$, denoted by $QH^\varphi(L)$, and it often happens that the latter vanishes for most $\varphi$ (e.g.\ monotone toric fibers), even when $L$ is not displaceable.

All known examples of Lagrangians behave in one of the following two ways:  either $QH^\varphi(L)\cong H(L;\mathbb{F})$, where $H(L)$ is the singular homology of $L$ and the isomorphism is not assumed to be canonical.  In this case, $L$ is said to be $\varphi$-wide.  The other extreme possibility is $QH^\varphi(L) = 0$, in which case $L$ is called $\varphi$-narrow and $\varphi$ is a narrow representation.  This latter class of Lagrangians is the main character of this paper.  Relevant definitions will be given in the next sections.

Classically, Reidemeister torsion is an invariant extracted from the $\Z[\pi_1(X)]$-equivariant cellular complex of the universal cover $\tilde{X}$ of a CW-complex $X$.  It is a secondary invariant, meaning that it is defined precisely for representations $\pi_1(X) \to \mathbb{F}^\times$ for which the equivariant cellular homology vanishes.  In particular, the Euler characteristic of $X$ vanishes, $\chi(X; \mathbb{F})=0$. It has been used to classify 3-dimensional lens spaces up to homeomorphism and plays an important role in the classification of manifolds, via the s-cobordism theorem.  For an account of that theory and more, see for example Cohen \cite{Coh:simple}, Milnor \cite{Mil:torsion} or Turaev \cite{Tur:torsionbook}.

In this paper, we adapt Reidemeister torsion to the pearl complex of $\varphi$-narrow Lagrangians, which we call \textit{quantum Reidemeister torsion}.  This is rather direct over a field, although the homological algebra involved is that of periodic chain complexes (i.e.\ with a $\Z / 2 \Z$ grading), rather than bounded chain complexes, as explained in \S \ref{sec:torsion} and \ref{sec:pearlcomplex}.

Somewhat surprisingly, when trying to adapt the construction over \textit{rings}, in order to recover the more general notion of Whitehead torsion, one runs into algebraic difficulties due to the cyclic grading and it is not always possible to define a notion of torsion for such complexes, at least it is not clear to me how to do so.  See \S \ref{sec:whiteheadtorsion} and \ref{sec:pearlboundary} for more on this.

Other notions of torsion have been used in symplectic topology, although they were concerned with the Floer complex rather than the pearl complex, in the works of Abouzaid-Kragh \cite{Abou-Kragh:simplehomtypenearby}, Fukaya \cite{Fu:scobordism}, Lee \cite{Lee:lagtorsion, Lee:torsion1, Lee:torsion2}, Suarez \cite{Su:exactcob} and Sullivan \cite{SullM:kinvariants}.

\subsection{Main results}
Let us first emphasize the following assumption, which will hold throughout the text: \textbf{whenever we write a Lagrangian $L$, we always mean $L$ with a fixed orientation and a fixed spin structure.}

Consider $(M, \omega)$ a symplectic manifold, convex at infinity whenever it is not compact, with a closed, orientable and spin Lagrangian submanifold $L$.  We assume furthermore that $L$ is monotone, which implies that its minimal Maslov index is even and satisfies $N_L \geq 2$, as $L$ is orientable.  More precise definitions will be given from \S \ref{sec:torsion} onwards.

Our definition of torsion involves various choices of generic data $\mathcal{D} = (f, \rho, J)$, consisting of a Riemannian metric $\rho$ on $L$, a Morse function $f\colon L \to \R$ such that $(f, \rho)$ is Morse-Smale, and an almost-complex structure $J$ compatible with $\omega$.  The resulting torsion is denoted by $\tau((L, \varphi), \mathcal{D}) \in \K(\F)$, see \S \ref{sec:pearlcomplex}.  Here, $\varphi\colon H_2(M, L; \Z) \to \mathbb{F}^\times = GL(1, \mathbb{F})$ is a one dimensional representation of $H_2(M, L)$ and we assume that the characteristic of $\mathbb{F}$ does not divide the order of the torsion subgroup of $\oplus_i H_i(L;\Z)$, denoted by $\tor(L)$.   The multiplicative group $\K(\F) = \F^\times / \pm 1$ is called the reduced Whitehead group of $\F$.  Because torsion is defined at the chain-level, it is not a priori clear whether it depends on the choice of generic triple $\mathcal{D}$.  In the present paper, we describe a class of Lagrangians for which we can prove invariance and compute the torsion in terms of genus zero open Gromov-Witten invariants of $L$.  We postpone the general study of invariance to a future paper.    Recall from the work of Oh \cite{Oh:spectral} that to any monotone Lagrangian $L$, we can associate a spectral sequence $(E^*, d^*)$ using the Floer complex and an energy filtration.  Moreover, this can also be twisted by $\varphi$, see \S\ref{sec:d1complex}.  We say that $L$ is \textit{$E^{1, \varphi}$-narrow} if the first page of the twisted spectral sequence has no homology for a representation $\varphi\colon H_2(M, L) \to \mathbb{F}^\times$, which implies that $\varphi$ is a narrow representation; see Definition \ref{def:e1narrow}.  Our main result is as follows.  
\begin{mainthm}\label{thm:first}
Let $L$ be a closed, monotone, orientable, and spin Lagrangian submanifold.  Let $\F$ be a field such that $\text{char } \F \nmid |\tor(L)|$, $\varphi\colon H_2(M, L;\Z) \to \F^\times$ a representation and $\mathcal{D}$ a generic pearl data triple.  Assume that $(L, \varphi)$ is an $E^{1, \varphi}$-narrow pair.  Then the quantum Reidemeister torsion of $(L, \varphi)$ is independent of the choice of generic pearl data $\mathcal{D} = (f, \rho, J)$ and satisfies
$$\tau((L, \varphi), \mathcal{D}) = \dfrac{|\tor_{\text{ev}}(L)|}{|\tor_{\text{odd}}(L)|}\tau(E^{1, \varphi}, h_* \otimes 1) = \tau(E^{0, \varphi}, \text{Crit}_* f, h_*\otimes 1)\tau(E^{1, \varphi}, h_*\otimes 1)$$
Moreover, $\tau(E^{1, \varphi}, h_*\otimes 1)$, as a function of $\varphi$, can be expressed as a rational function of genus zero open Gromov-Witten invariants of $L$, weighted by $\varphi$.
\end{mainthm}
Some explanations are in order.  Here, $\tor_{\text{ev}}(L)$ (resp.\ $\tor_{\text{odd}}(L)$) is the torsion subgroup of $\oplus_{k} H_{2k}(L;\Z)$ (resp.\ $\oplus_{k} H_{2k+1}(L;\Z)$), and their order are  non-zero elements of $\F$, by assumption on the characteristic.   $\tau(E^{0, \varphi}, \text{Crit}_* f, h_*\otimes 1)$ is Milnor's torsion of page $0$ from Oh's spectral sequence, see \S \ref{sec:milnorsdef} and \S \ref{sec:e1narrow}.  Open Gromov-Witten invariants are defined in \S \ref{sec:openGW}.  Finally, $\tau(E^{1, \varphi}, h_*\otimes 1)$ is the torsion of the first page, see \S \ref{sec:torsion} and \ref{sec:e1narrow}.  The theorem will be proved and stated again in Theorem \ref{thm:main}, once all the relevant definitions are introduced.

Given $g \in \text{Symp}(M, \omega)$ and a spin Lagrangian $L$, we get a new spin Lagrangian $g(L)$, that we endow with the orientation and spin structure induced from the ones on $L$ by $g$.  There is also an action on representations $\varphi\colon H_2(M, L;\Z) \to \F^\times$, defined by $\varphi \circ g^{-1}_* \colon H_2(M, g(L)) \to \F^\times$.  In \S \ref{sec:e1narrow}, we will prove:
\begin{cor}\label{cor:thmfirst}
Fix $g \in \text{Symp}(M, \omega)$.  Under the assumptions of Theorem \ref{thm:first}, $g(L)$ is $E^{1, \varphi \circ g^{-1}_*}$-narrow and quantum Reidemeister torsion is invariant under the action of $g$, that is, $\tau(L, \varphi) = \tau(g(L), \varphi \circ g^{-1}_*)$.  In particular, torsion is an invariant of the Hamiltonian isotopy class of $L$.
\end{cor}

Let us now introduce a special class of $E^{1, \varphi}$-narrow Lagrangians for which we can do very explicit calculations.  More details are given in \S \ref{sec:assumptionstar}.  Assume that $L=S^{2k+1} \times V \subset (M, \omega)$, where $V$ is orientable, spin, and assume that the minimal Maslov number of $L$ is $N_L=2k+2$.  Let $\sigma:= * \times [V] \in H_{n-(2k+1)}(L)$ denote the fundamental class of $V$ in $L$.  Let $\varphi\colon H_2(M, L) \to \mathbb{F}^\times$ be a representation such that the associated twisted quantum homology $QH^\varphi(L)$ vanishes, and suppose that we have
\begin{equation}
r_\varphi:= \sum_{A \in H_2(M, L)} GW_{0,2}^A(\sigma, pt)\varphi(A) \neq 0 \tag{$\star$},
\end{equation}
where $GW_{0,2}^A(x, y)$ denotes genus zero open Gromov Witten invariant with two boundary marked points, going through the cycles $x$ and $y$, in the homology class $A$ (see \S \ref{sec:openGW}).  The Maslov index of $A$ must be $2k+2$ and the assumptions imply that $L$ is $E^{1, \varphi}$-narrow.  Our second main result, stated again in Theorem \ref{thm:assumptionstar}, is then:
\begin{mainthm}
Let $L=S^{2k+1} \times V$ be a closed, monotone, orientable, and spin Lagrangian submanifold.  Let $\F$ be a field such that $\text{char } \F \nmid |\tor(L)|$, $\varphi\colon H_2(M, L;\Z) \to \F^\times$ a representation and $\mathcal{D}$ a generic pearl data triple.  Suppose further that assumption $(\star)$ holds.  Then $L$ is $E^{1, \varphi}$-narrow, the quantum Reidemeister torsion of $(L, \varphi)$ does not depend on $\mathcal{D}$ and $\tau(L, \varphi) = \dfrac{|\tor_{\text{ev}}(L)|}{|\tor_{\text{odd}}(L)|} r_\varphi^{-\chi(V)} \in \K(\F)$, where $|\tor_*(L)|$ is considered an element of $\F^\times$.
\end{mainthm}
\begin{cor}
Assumption $(\star)$ is satisfied for every narrow representation $\varphi$ whenever
\begin{itemize}
\item $L=S^1$.  Then $\tau(L, \varphi) = r_\varphi^{-1}$.
\item $L$ is a $m$-torus, $L=S^1 \times S^1\times \cdots \times S^1$ ($m$ times), where $m \geq 2$. Then $\tau(L, \varphi) = 1$
\item $L=S^1 \times \Sigma_g$, where $\Sigma_g$ is a closed orientable surface of genus $g \geq 1$.  Then $\tau(L, \varphi) = r_\varphi^{2(g-1)}$
\item $L$ is given by a product Lagrangian embedding $S^1 \times V \subset (S^2 \times X, \omega_{S^2}\oplus \omega_X)$, where $H_2(X, V) = 0$.
\end{itemize}
\end{cor}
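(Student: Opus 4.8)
The plan is to deduce each item from the second main theorem (Theorem~\ref{thm:assumptionstar}) by checking its hypotheses in each case: the Lagrangian in question is monotone, orientable and spin; it has the form $S^{2k+1}\times V$ with $k=0$, so $N_L=2=2k+2$, with $H_*(V;\Z)$ free and $V$ spin; and assumption~$(\star)$ holds, i.e.\ $r_\varphi\neq 0$ for every $\varphi$ with $QH^\varphi(L)=0$. Granting $(\star)$, that theorem gives $\tau_\varphi(L)=r_\varphi^{-\chi(V)}$, and the displayed values follow from $\chi(\mathrm{pt})=1$, $\chi(T^{m-1})=0$ for $m\geq 2$, and $\chi(\Sigma_g)=2-2g$; for the product embedding $S^1\times V\subset S^2\times X$ no closed value is claimed ($V$ being arbitrary), and $S^1\times\Sigma_g$ is the special case $V=\Sigma_g$.

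The topological hypotheses are immediate: $V$ is a point, a torus $T^{m-1}$, an orientable closed surface $\Sigma_g$, or --- in the product-embedding item --- a manifold satisfying the hypotheses of Theorem~\ref{thm:assumptionstar} by assumption; all of these are spin with free integral homology, $L=S^1\times V$ is orientable, and the relevant embedding (an $S^1$ of minimal Maslov number $2$, the Clifford-type torus in $\C^m$, or the stated product embedding into $S^2\times X$) is monotone with $N_L=2$, every class in $\pi_2(M,L)$ of positive symplectic area having Maslov index a positive multiple of $2$.

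The real content is $(\star)$, and it comes from the product structure. First I would invoke a Künneth formula for the pearl complex, $QH^\varphi(S^1\times V)\cong QH^{\varphi_1}(S^1)\otimes_{\mathbb F}QH^{\varphi_2}(V)$ (with $\varphi_1,\varphi_2$ the restrictions of $\varphi$), compatible with the splitting of the ambient manifold. For $L=S^1$ and for a product embedding $S^1\times V\subset S^2\times X$ with $\pi_2(X,V)=0$ (which also covers $S^1\times\Sigma_g$), the factor $V$ is wide, $QH^{\varphi_2}(V)\cong H(V;\mathbb F)\neq 0$, and all holomorphic disks of $L$ are carried by the $S^1$ factor, so $QH^\varphi(L)=0$ is equivalent to $QH^{\varphi_1}(S^1)=0$. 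The pearl complex of $S^1$ has two generators (a minimum and a maximum of a Morse function), vanishing classical differential, and its only quantum component is $C_0\to C_1$, which is multiplication by $r_\varphi$ (the weighted count of Maslov-$2$ disks through $\sigma$ and a point); hence $QH^{\varphi_1}(S^1)=0$ iff $r_\varphi\neq 0$, which is $(\star)$ and, for $L=S^1$, identifies $\tau_\varphi(S^1)$ with the torsion of $[\,\mathbb F\xrightarrow{\ r_\varphi\ }\mathbb F\,]$, namely $r_\varphi$. For the $m$-torus $L=T^m=S^1\times T^{m-1}$ the Clifford-type basic disks are spread over all $m$ factors, so instead I would use displaceability of $T^{m-1}\subset\C^{m-1}$ to get $QH^{\varphi_2}(T^{m-1})=0$ for every $\varphi_2$, hence $QH^\varphi(T^m)=0$ for every $\varphi$ by Künneth; and since $\sigma=\{\mathrm{pt}\}\times[T^{m-1}]$ constrains only the first ($S^1$) coordinate, a dimension count should show that the only basic class contributing to $r_\varphi$ is the one supported in the $S^1$ factor, with coefficient $\pm1$, so $r_\varphi=\pm\varphi(A_1)\neq 0$; then $\tau_\varphi(T^m)=r_\varphi^{-\chi(T^{m-1})}=r_\varphi^{\,0}=1$.

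The main obstacle is the Gromov--Witten bookkeeping inside $(\star)$: setting up the pearl-complex Künneth isomorphism with its compatibility with the energy filtration and the local systems $\varphi_i$, and, for the torus, checking that a basic class meeting the $V=T^{m-1}$ factor contributes $0$ to $r_\varphi$ once a marked point is forced onto $\sigma$ --- the expected mechanism being that such a disk has constant $S^1$-coordinate, so the incidence with $\sigma$ pins that coordinate while leaving a free boundary marked point, making the constrained moduli space odd-dimensional and hence of zero signed count. A secondary point to be pinned down is the sign/reciprocal convention relating $\tau$ in Theorem~\ref{thm:assumptionstar} to the elementary computation for $S^1$, precisely enough to extract the exponents in the stated formulas --- in particular $r_\varphi^{2(g-1)}=r_\varphi^{-\chi(\Sigma_g)}$ for $S^1\times\Sigma_g$.
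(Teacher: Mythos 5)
There is a genuine gap, and it stems from a misreading of what the Corollary asserts. Only the last item concerns a Lagrangian sitting in a split ambient $(S^2\times X,\omega_{S^2}\oplus\omega_X)$. The first three items are about \emph{arbitrary} monotone Lagrangian embeddings of $S^1$, of an $m$-torus, and of $S^1\times\Sigma_g$: the ambient $(M,\omega)$ is not assumed to split, no ``Clifford-type'' model is given, and $T^{m-1}$ is not assumed to sit in $\C^{m-1}$. Your plan rests on a K\"unneth isomorphism $QH^\varphi(S^1\times V)\cong QH^{\varphi_1}(S^1)\otimes QH^{\varphi_2}(V)$ ``compatible with the splitting of the ambient manifold,'' and on holomorphic disks factoring through the $S^1$ factor. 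Neither of these is available for items two and three. No K\"unneth theorem for the pearl complex is stated or proved in the paper, and even if it were, it would require a split ambient; applying it to an abstract $T^m$ or $S^1\times\Sigma_g$ is a category error. Likewise, invoking displaceability of $T^{m-1}\subset\C^{m-1}$ and odd-dimensionality of moduli of ``disks with constant $S^1$-coordinate'' presupposes a product geometry that the hypotheses do not supply.

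The paper's route to $(\star)$ for the torus and the surface is instead purely homological: Buhovsky's theorem (Theorem~\ref{thm:buhovsky}) applies because the singular homology ring of $T^m$ (resp.\ $S^1\times\Sigma_g$) is generated in degree $n-1$, giving $N_L=2$ and acyclicity of $(H_*(L;\mathbb F),d_{1*}^\varphi)$. For the torus one then picks a degree-$(m-1)$ generator $x_1$ with $d_{1*}^\varphi(x_1)\neq 0$ and identifies it with $\sigma=[\mathrm{pt}]\times[T^{m-1}]$ for a suitable splitting $L=S^1\times T^{m-1}$. For $S^1\times\Sigma_g$ with $g\ge 2$, the exterior-algebra relations $\alpha_i\beta_i=\alpha_j\beta_j$, $\alpha_i\beta_j=0$ combined with the Leibniz rule for $d_{1*}^\varphi$ (Theorem~\ref{thm:specseq}) force $d_{1*}^\varphi(\alpha_i)=d_{1*}^\varphi(\beta_j)=0$, so $d_{1*}^\varphi(z)=cL$ with $c\neq 0$ by acyclicity, and $z=\ast\times[\Sigma_g]=\sigma$. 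Only for the fourth item does the paper argue as you do, choosing a split almost-complex structure to see that every disk with boundary on $L$ is $(u,\mathrm{pt})$ with $u$ a disk in $(S^2,S^1)$, whence $d_{1*}(\ast\times[V])=(A-B)L$. Your reduction to Theorem~\ref{thm:assumptionstar} and the Euler-characteristic bookkeeping are fine; what is missing is a proof of $(\star)$ that does not assume the ambient splits, i.e.\ the Buhovsky/Leibniz argument.

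One more point worth flagging: the explicit $S^1$ computation in \S\ref{sec:circle} yields $\tau_\varphi(S^1)=1/r_\varphi$, and Theorem~\ref{thm:assumptionstar} gives $r_\varphi^{-\chi(\mathrm{pt})}=r_\varphi^{-1}$, so the exponent in the first bullet of the stated Corollary (where $\tau_\varphi(L)=r_\varphi$ appears) is the reciprocal of what the paper proves. Your derivation via a two-term complex $\mathbb F\xrightarrow{r_\varphi}\mathbb F$ also needs to be squared against the paper's definition of $\tau_2$ as $\det[b_{[1]}b_{[0]}/c_{[0]}]/\det[b_{[0]}b_{[1]}/c_{[1]}]$, which determines which power of $r_\varphi$ one gets.
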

We provide explicit examples in \S \ref{sec:ex} where the coefficient $r_\varphi$ is not trivial as an element of either $\K(\F)$ or $\F^\times / \pm \varphi(H_2(M, L))$.  The last case in the Corollary occurs for example if $V$ is closed and $X$ is the cotangent bundle of $V$.

\begin{rem}
The case $L=S^1\times S^2$ is not covered by our methods.  We provide an example due to Oakley-Usher \cite{Oa-Ush:certainlags} in \S \ref{sec:s1s2}.  In our subsequent paper \cite{Cha:3fold}, we study quantum Reidemeister torsion for narrow, orientable Lagrangian 3-manifolds and prove that $S^1\times S^2$ is indeed $E^{1, \varphi}$-narrow for every narrow representation.
\end{rem}

\subsection{Stably-free modules and the pearl complex}
Whitehead torsion is a more general version of Reidemeister torsion defined for acyclic chain complexes over a ring $R$, classically the group ring $\Z[\pi_1(X)]$, where $X$ is a finite $CW$-complex, see Milnor \cite{Mil:torsion}.  Its definition relies on the fact that the boundary modules of a bounded acyclic complex are stably-free $R$-modules.  As we point out in \S \ref{sec:whiteheadtorsion}, this fact is no longer true when considering general cyclic-graded chain complexes.  Therefore, in order to define Whitehead torsion for the pearl complex (as well as the Lagrangian Floer complex) one has to answer the following structural question:
\begin{qs}\label{qu:stablyfree}
Are the boundary modules in the pearl complex of a narrow Lagrangian stably-free?
\end{qs}
We discuss this in more details in \S \ref{sec:pearlboundary}, where we prove that over the group ring $\Z[H_2(M, L)]$, the pearl boundary modules are projective.

\subsection{Some questions related to Fukaya's symplectic s-cobordism conjecture}\label{sec:fukayascob}
In \cite{Fu:scobordism}, Fukaya sketches a construction of Whitehead torsion, for a pair of Lagrangian submanifolds (satisfying suitable assumptions that we omit), denoted by $\tau(L_1, L_2)$, which is defined precisely when the Floer homology $HF(L_1, L_2)$ of the pair vanishes.    He then states the following striking conjecture:
\begin{cnj}[Fukaya's symplectic s-cobordism conjecture]
Let $L_1, L_2$ be two Lagrangians submanifolds.  Assume that $HF(L_1, L_2)$ vanishes and moreover that $\tau(L_1, L_2)=1$.  Then, there exists a Hamiltonian isotopy $\phi$ such that $L_1 \bigcap \phi(L_2) = \emptyset$.
\end{cnj}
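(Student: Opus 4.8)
The plan is to imitate the classical s-cobordism theorem, with the Floer complex $CF(L_1,L_2)$ in the role of the equivariant cellular complex of an h-cobordism: vanishing of $HF(L_1,L_2)$ says that this complex is acyclic (the ``h''), and $\tau(L_1,L_2)=1$ should say that it is \emph{simple} chain homotopy equivalent to the zero complex, i.e.\ reducible to the trivial complex by a finite sequence of elementary operations — base changes by elementary matrices over the coefficient ring (``handle slides'') and collapses of a pair of generators $p,q$ joined by a differential coefficient $\pm 1$ (``handle cancellations''). So the first step would be purely algebraic: unwind the definition of Floer/Whitehead torsion until $\tau=1$ becomes literally equivalent to the existence of such a sequence of moves. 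This already presumes an affirmative answer to Question~\ref{qu:stablyfree}, so that Whitehead torsion is even defined for the cyclically graded complex; absent that, one would have to restrict the conjecture to the setting where torsion makes sense (e.g.\ $\pi_2(M,L)$ abelian, or field coefficients).

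The second step is the geometric heart of the argument: realize each elementary algebraic move by a Hamiltonian isotopy of $L_2$. Handle slides ought to be the easy case, realized by pushing $L_2$ across an embedded disc without changing the number of intersection points. Handle cancellation is the hard case and demands a ``Floer-theoretic Whitney trick'': given intersection points $p,q$ with a single rigid $J$-holomorphic strip $u$ between them contributing $\pm 1$ to the differential, one wants a Hamiltonian isotopy supported near $\mathrm{im}(u)$ that deletes both $p$ and $q$ and creates no new intersections. I would attempt this as a parametric argument over the compactified moduli space of strips from $p$ to $q$, arranging — via the preceding algebraic reductions — that this moduli space is as simple as possible, and controlling strip-breaking and disc bubbling by monotonicity.

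The third step is the induction: after one cancellation, recompute the complex, whose generators are the remaining, fewer, intersection points, observe that its torsion is still trivial, and repeat; one then checks that the process terminates and that no step reintroduces intersections, so that $L_1 \cap \phi(L_2) = \emptyset$ for the composite isotopy $\phi$. As in topology, one should expect a stabilization subtlety: it may be necessary to first enlarge the intersection by a Hamiltonian isotopy that creates a cancelling pair, in order to line up the algebra, and then remove it at the end; and the low-dimensional cases (small $\dim L_i$, small fundamental group) should be expected to be genuinely exceptional, just as the Whitney trick fails in low dimensions.

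The main obstacle is Step~2. There is at present no general geometric realization of an algebraic handle cancellation in Floer theory: a single rigid strip between two intersection points need not bound an embedded Whitney-type disc, strip-breaking and disc bubbling can obstruct the surgery, and the required isotopy is known to exist only under strong extra hypotheses (Fukaya's partial results, or dimension restrictions). It is precisely this gap that keeps the statement a conjecture; a complete proof would seem to need either a genuine ``quantum Whitney trick'' or a reformulation that avoids explicit cancellation — for instance a continuation-type moduli space interpolating directly between the two Lagrangian configurations, whose count is governed by the torsion.
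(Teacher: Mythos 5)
This statement is Fukaya's symplectic s-cobordism conjecture, and the paper does not prove it — it is recorded explicitly as a \emph{conjecture}, attributed to Fukaya \cite{Fu:scobordism}, and used only as motivation for the questions that follow it. There is therefore no ``paper's own proof'' to compare against. Your write-up correctly treats the statement as open: you lay out the natural strategy (algebraically reduce $\tau=1$ to a sequence of elementary expansions/collapses, then realize each move by a Hamiltonian isotopy via a Floer-theoretic Whitney trick, then induct), and you correctly identify where it breaks down — there is no general geometric realization of an algebraic handle cancellation in Floer theory, and you also flag the prior algebraic issue (Question~\ref{qu:stablyfree}) that Whitehead torsion may not even be defined for the cyclically graded Floer complex unless the boundary modules are stably free. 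So you have not produced a proof, but you were not expected to, and your diagnosis of the obstruction is accurate.

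One small caution: your framing suggests that the algebraic step (``$\tau=1$ implies a finite sequence of elementary moves'') is routine once torsion is defined. Even granting stably-free boundaries, the cyclic ($\Z/2\Z$) grading is a genuine complication: the classical inductive reduction of an acyclic bounded complex to the trivial one uses the filtration by degree, which is absent here, and the paper's \S\ref{sec:whiteheadtorsion} gives examples showing that periodic acyclic complexes can behave very differently from bounded ones. So even Step~1 is not simply ``unwind definitions''; it would need an argument adapted to the $\Z/2\Z$-graded setting. Beyond that, your account of Step~2 as the essential unresolved difficulty matches the state of the art, and your closing paragraph honestly states that this is why the statement remains a conjecture.
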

This leads us to the following questions:
\begin{qssnonum}
a.  Suppose that $L$ can be displaced by a Hamiltonian isotopy.  Does it follow that $\tau(L, \varphi) \equiv 1 \in \mathbb{F}^\times / \varphi(H_2(M, L))$ for every representation $\varphi\colon H_2(M, L) \to \mathbb{F}^\times$?  Does the converse hold if quantum Reidemeister torsion is replaced with quantum Whitehead torsion?\\
b. When $L_1=L_2$, do the above two notions of torsion coincide?  In other words, are the pearl and Floer complexes simple homotopy equivalent?
\end{qssnonum}
Given Theorem \ref{thm:first}, an affirmative answer to the first question would imply very strong restrictions on open Gromov-Witten invariants of displaceable Lagrangians.  The second question is motivated by the following classical phenomenon in algebraic topology:  there are topological spaces, namely three-dimensional lens spaces, which are homotopy equivalent (hence have the same homology) but that are not homeomorphic, because they do not have the same Reidemeister torsion, see \cite{Coh:simple}.

\section{The torsion of an acyclic chain complex}\label{sec:torsion}
\subsection{Bounded complexes}\label{sec:boundedcomp}
Following the presentation by Milnor \cite{Mil:torsion}, we recall the definition of torsion of an acyclic chain complex.  See also \cite{Coh:simple} and \cite{Tur:torsionbook}.  

The reduced Whitehead group of a field $\mathbb{F}$ is the multiplicative abelian group $$\K(\F) = \F^\times / \pm 1,$$ where $\F^\times = \F \backslash \{ 0 \}$. Given a finite-dimensional vector space $V$ over $\F$ of dimension $r$ and two bases $b=(b_1, \dots, b_r)$, $c=(c_1, \dots, c_r)$, there is a transition matrix $(a_{i, j})$, denoted simply $b/c$, expressing $b$ in terms of $c$:
$$b_i=\sum a_{ij} c_j.$$
Define $$[b/c] = \det b/c \in \K(\F).$$  This induces an equivalence relation on the set of bases of $V$, where $b=c$ if and only if $[b/c] = 1$.

Let $0 \to C_n \to C_{n-1} \to \cdots \to C_1 \to C_0 \to 0$ be a bounded chain complex over a field $\mathbb{F}$ such that each $C_i$ has a fixed finite basis $c_i=(c_{i,1}, \dots, c_{i, r_i})$.  Denote by $B_i$ the image of the boundary morphism $d\colon C_{i+1} \to C_i$ and by $Z_{i+1}$ its kernel.  

Choose bases $b_i=(b_{i,1}, \dots, b_{i,k_i})$ of $B_i$ and assume that $C_*$ is acyclic, so that $Z_{i} = B_i$ and the Euler characteristic is $\chi(C_*)=0$. One then has exact sequences
$$\xymatrix{0 \ar[r] & B_i \ar[r] & C_i \ar[r]^d & B_{i-1} \ar[r] & 0}$$
which split since we work over a field.  Given splittings $s_{i-1}\colon B_{i-1} \to C_i$, we get a new basis of $C_i$ obtained by concatenating the bases $s_{i-1}(b_{i-1})$ and $b_i$, which we write $s_{i-1}(b_{i-1})b_i$.  

Notice that given splittings $s_{i-1}$ and $s'_{i-1}$, we have $[s_{i-1}(b_{i-1})b_i/c_i]= [s'_{i-1}(b_{i-1})b_i/c_i]$.  Moreover, $s_{i-1}(b_{i-1})b_i$ and $s'_{i-1}(b_{i-1})b_i$ are equivalent bases.  We will often omit the section and write simply $b_{i-1}b_i$ for the new basis.

\begin{dfn}
The torsion of the chain complex $C_*$ with respect to the bases $\{ c_i \}$ is
$$\tau(C_*, c_* )= \prod_{i=0}^n [b_{i-1}b_i/c_i]^{(-1)^i} \in \K(\F).$$
\end{dfn}
As in \cite{Mil:torsion}, if we choose different bases $b'_i$ of $B_i$, we get
\begin{equation}\label{eq:changeb}
\prod_{i=0}^n [b'_{i-1}b'_i/c_i]^{(-1)^i}= \prod_{i=0}^n ([b_{i-1}b_i/c_i]*[b'_i/b_i]*[b'_{i-1}/b_{i-1}])^{(-1)^i}
\end{equation}
and the product $\prod_{i=0}^n ([b'_i/b_i]*[b'_{i-1}/b_{i-1}])^{(-1)^i}$ is equal to 1, so $\tau$ is independent of $b_i$.

Finally, torsion depends on the choice of basis $c_*$, but equivalent bases yield the same torsion.  Indeed, choosing another basis $c'_*$, we get
\begin{equation}\label{eq:tauchangebase}
\tau(C_*, c'_*) = \tau(C_*, c_*) \prod_{i=0}^n [c_i/c'_i]^{(-1)^i}
\end{equation}

\subsubsection{Non-acyclic complexes: Milnor's definition and torsion subgroups}\label{sec:milnorsdef}
If $C_*$ is not acyclic and $H_*(C)$ has a fixed basis $h_*$, then one has the following exact sequences:
\begin{align}\label{eq:exactseqmilnortorsion}
&\xymatrix{0 \ar[r] & Z_i \ar[r] & C_i \ar[r]^d & B_{i-1} \ar[r] & 0}\\
&\xymatrix{0 \ar[r] & B_i \ar[r] & Z_i \ar[r] & H_i \ar[r] & 0} \nonumber
\end{align}
which combine to yield a new basis $b_{i-1} h_i b_{i}$ of $C_i$.  
\begin{dfn}\label{eq:nonacyclictorsion}
The torsion is defined as
$$\tau(C_*, c_*, h_*) = \prod_{i=0}^n [h_i b_i b_{i-1}/c_i]^{(-1)^i} \in \K(\F).$$
Choosing equivalent bases to $c_*$ or $h_*$ does not affect its value.
\end{dfn}
Suppose now that $C_*$ is a bounded chain complex that is finitely generated as a $\Z$-module.  As above, let $B_i$ and $Z_i$ be the groups of boundaries and cycles, which are free $\Z$-modules, say $B_i \cong \Z^{k(i)}$ and $Z_i \cong \Z^{r(i)}$.  Write $H_i(C_*;\Z) = \Z^{r(i) - k(i)}\oplus \Z/ a_1 \Z \oplus \cdots \oplus \Z/ a_{s(i)} \Z$.  By standard algebra for modules over principal ideal domains, one can choose bases of $Z_i$ and $B_i$ such that, in the second exact sequence of (\ref{eq:exactseqmilnortorsion}), we have
\begin{align}\label{eq:trivialext}
\Z^{k(i)}=B_i & \to Z_i=\Z^{r(i)} \nonumber\\ 
b_l & \mapsto 
\begin{cases}
a_l z_l \quad 1 \leq l \leq s(i)\\
z_l     \quad s(i) < l \leq k(i) 
\end{cases}
\end{align}
In other words, there is only one free extension (of a given rank) of $H_i(C_*;\Z)$ by another free module (of a given rank), and, in the appropriate bases, it is given by the obvious maps written above.  
\begin{rem}\label{rem:changebasisinZ}
Any two basis of a free $\Z$-module are equivalent, since the transition matrix has determinant plus or minus one.
\end{rem}

Now, take a field $\F$ whose characteristic does not divide any of the $a_l$'s and tensor both sequences in (\ref{eq:exactseqmilnortorsion}) with $\F$.  This preserves exactness, boundaries, and cycles, by assumption on the characteristic.  Note also that a $\Z$-basis $h_*$ of $H_*(C_*;\Z)/ \tor$, called a basis of the free part of homology, gives a basis of $H_*(C_*;\F)$, denoted by $h_* \otimes_\mathbb{F} 1$ or simply $h_*$ when the context is clear.  By the remark above, all bases of $H_*(C_*; \F)$ obtained this way are equivalent.

Then, the second sequence above (considered over $\F$) is now free and is still given by
(\ref{eq:trivialext}).  Therefore, we get
$$[(h_i\otimes_\mathbb{F} 1) b_i b_{i-1}/c_i] = \prod_{j=1}^{s(i)} a_j = |\tor H_i(C_*;\Z)| \in \K(\F).$$
Finally, 
\begin{equation}\label{eq:torsioneqtorsion}
\tau(C_* \otimes \F, c_* \otimes 1, h_* \otimes 1) = \prod_i |\tor(H_i(C_*;\Z))|^{-1^i}
\end{equation}
Simply put, torsion equals torsion!  In case $C_*$ is the cellular or Morse complex of a manifold $X$, we will often abbreviate this formula as
$$\tau(C_* \otimes \F, c_* \otimes 1, h_* \otimes 1) = \dfrac{|\tor_{\text{ev}}(X)|}{|\tor_{\text{odd}}(X)|}$$

\begin{rem}\label{rem:basis_h_canonical}
Combining Remark \ref{rem:changebasisinZ} and formula (\ref{eq:tauchangebase}), we see that this torsion depends only on the equivalence class of $c_*$ and the unique class of $h_* \otimes 1$.
\end{rem}
\subsection{Periodic complexes}\label{sec:periodictorsion}
Consider now a based 2-periodic chain complex $C_{[*]}$ over a field $\mathbb{F}$, with bases $c_{[i]}$:
$$\xymatrix{C_{[1]} \ar@/^1pc/[r]^{d} & \ar@/^1pc/[l]^d C_{[0]}}$$
The primary example we have in mind is the pearl complex with a cyclic grading, see \S \ref{sec:pearltorsion}.  A similar version using maximal abelian torsion for the Floer complex has been considered by Lee \cite[\S 2.2.3]{Lee:torsion1}.

Given such an acyclic chain complex, we may pick bases $b_{[i]}$ and define its torsion:
\begin{dfn}
The torsion of a 2-periodic acyclic chain complex is
$$\tau_2(C_{[*]}, c_{[*]} ) = \dfrac{[b_{[1]}b_{[0]}/c_{[0]}]}{[b_{[0]}b_{[1]}/c_{[1]}]} \in \K(\F).$$
Equivalent bases of $C_{[*]}$ yield the same torsion.
\end{dfn}
Equation (\ref{eq:changeb}) still applies to prove that $\tau_2$ is independent of the choice of bases $b_{[i]}$.  Choosing different sections again leaves $\tau_2$ invariant.

This torsion generalizes the one defined in the previous section.  Given a bounded chain complex $C_*$, define a 2-periodic complex by setting 
$$(C_{[0]}, c_{[0]}) = \bigoplus_{k \text{ even}} (C_k, c_k), \quad (C_{[1]}, c_{[1]}) = \bigoplus_{k \text{ odd}} (C_k, c_k) $$
with the differential being simply the direct sum of the differentials of $C_*$.   Note that $\chi(C_{[*]}) = \chi(C_*)$. Obviously, $B_{[0]} = \oplus_{k \text{ even}} B_k$ and $B_{[1]} = \oplus_{k \text{ odd}} B_k$.  The split sequences
$$
\xymatrix{
0 \ar[r] & B_i \ar[r] & C_i \ar[r] & B_{i-1} \ar@/^1pc/[l]^{s_{i-1}} \ar[r] & 0
}
$$
can be added up by defining $s_{[0]}=\oplus s_{\text{even}}$, $s_{[1]}=\oplus s_{\text{odd}}$, to yield two splittings
\begin{equation}\label{eq:split}
\xymatrix{
0 \ar[r] & B_{[i]} \ar[r] & C_{[i]} \ar[r] & B_{[i-1]} \ar@/^1pc/[l]^{s_{[i-1]}} \ar[r] & 0
}
\end{equation}
After a reordering of the bases, we get block diagonal matrices $b_{[1]}b_{[0]}/c_{[0]} = \oplus_{k \text{ even}} b_{k+1}b_{k}/c_{k}$ and $b_{[0]}b_{[1]}/c_{[1]}=\oplus_{k \text{ odd}} b_{k+1}b_{k}/c_{k}$, therefore
$$\tau(C_*, c_*) = \tau_2(C_{[*]}, c_{[*]}).$$

\begin{remnonum}
If an acyclic complex has a $\Z / 2k \Z$-grading, then one can cook up a $\Z /2\Z$-graded acyclic complex out of it, by reducing the grading modulo 2, just as above, hence there is no need to define the notion for such complexes, 2-periodic complexes are enough for this purpose.  It is not clear how to adapt these definitions to $\Z / (2k+1)\Z$-graded complexes.
\end{remnonum}
\subsection{Stably-free modules and Whitehead torsion}\label{sec:whiteheadtorsion}
It is possible to define the torsion of free, bounded acyclic complexes with a preferred basis, over rings $R$ satisfying the invariant basis property (IBP) - e.g. commutative rings and group rings - as an element of the Whitehead group of that ring, denoted by $K_1(R)$.  This is ultimately possible because the boundary modules $B_i$ are stably-free (meaning that there exist positive integers $s_i$ and $k_i$ such that $B_i \oplus R^{s_i} \cong R^{k_i}$),  by a simple induction argument.  See e.g.\ Cohen \cite[\S 13]{Coh:simple} or \cite[\S 4]{Mil:torsion} for more on this.

Unfortunately, the same procedure does not work for periodic complexes over such rings, since the boundary modules $B_{[i]}$ are not automatically stably-free, in fact not even projective, as the following examples show.  Moreover, it is easy to see that $B_{[0]}$ is stably-free if and only if $B_{[1]}$ is.

\newcounter{Excounter}
\stepcounter{Excounter}
\begin{exsnonum}
(\arabic{Excounter}) \stepcounter{Excounter}
Let $B_{[0]}$ be a projective module that is not stably-free over some ring $R$ satisfying the (IBP) property (these exist), so that there exists a complement $B_{[1]}$ to $B_{[0]}$ in a free module, i.e.\ $B_{[0]}\oplus B_{[1]} \cong R^k,$ for some $k \geq 1$.  Set $C_{[0]} = C_{[1]}=R^k$ and define differentials by projecting to each factor
\begin{align*}
d\colon C_{[1]} & \to C_{[0]} & \delta\colon C_{[0]} &\to C_{[1]}\\
(b_0, b_1) & \mapsto (b_0, 0) & (b_0, b_1) &\mapsto (0, b_1)
\end{align*}
Then $C_{[*]}$ is acyclic but the boundary modules are not stably-free, since  $\text{im } d \cong B_{[0]}$ and $\text{im } \delta \cong B_{[1]}$.\\

\noindent(\arabic{Excounter})\stepcounter{Excounter}
Take $R=\Z[\Z/p\Z] \cong \Z[t]/(t^p -1)$, the group ring of $\Z/p\Z$, for $p$ an odd prime number.  The polynomials $t-1$ and $1+t+\dots +t^{p-1}$ are zero divisors in this ring.  Set $C_{[0]} = C_{[1]} =R$ with differentials
\begin{align*}
d\colon C_{[1]} & \to C_{[0]} & \delta\colon C_{[0]} &\to C_{[1]}\\
r & \mapsto r(t-1) & r &\mapsto r(1+t+\dots +t^{p-1})
\end{align*}
This complex is acyclic.  The boundaries are $R$-submodules of $R$ given by $B_{[0]} = (t-1)R$, $B_{[1]} = (1+t+\dots +t^{p-1})R$.  These modules are not projective, hence not stably-free, since projective submodules of $R$ correspond to idempotents elements of $R$, and the only idempotent elements of $\Z[G]$, with $|G|$ finite, are $0$ and $1$, see e.g.\ Weibel \cite[Chapter 1, Example 2.1.2 and Chapter 2, \S 2, Corollary 2.5.3]{Weibel:kbook}.
\end{exsnonum}


\section{The pearl complex and its torsion}\label{sec:pearlcomplex}
We refer to Biran--Cornea's papers \cite{Bi-Co:Yasha-fest, Bi-Co:rigidity, Bi-Co:lagtop} for foundations and applications of Lagrangian quantum homology.  The version we use here (with oriented moduli spaces of pearls) is adapted from \cite{Bi-Co:lagtop}.
\subsection{Setting}
Throughout the text, $(M, \omega)$ is a $2n$-dimensional symplectic manifold that is connected and convex at infinity whenever it is not closed.  The space of $\omega$-compatible almost complex structures on $M$ is denoted by $\mathcal{J}_{\omega}$.

All Lagrangian submanifolds $L \subset (M, \omega)$ are closed and connected.  Moreover, they are endowed with a fixed choice of orientation and spin structure which we do not write.

Let $\omega\colon \pi_2(M, L) \to \R$ be given by the symplectic area of discs and  $\mu\colon \pi_2(M, L) \to \Z$ denote the Maslov index.  The generator of the image of $\mu$ is called the minimal Maslov index and is denoted by $N_L$.  Lagrangians are assumed \textbf{monotone}, that is, there exists a constant $\eta> 0$ such that
\begin{itemize}
	\item $\omega = \eta\mu$
	\item $N_L \geq 2$
\end{itemize}
Since $L$ is orientable, $N_L$ is even and the second condition above follows from the first.

\subsection{The 2-periodic pearl complex}\label{sec:2pearl}
Fix a triple $\mathcal{D}=(f, \rho, J)$, where $f\colon L \to \R$ a Morse function, $\rho$ is a Riemannian metric such that $(f, \rho)$ is a Morse-Smale pair, and $J \in \mathcal{J}_{\omega}$ is an almost complex structure compatible with $\omega$.  

Set $$\mathcal{C}_k = \mathcal{C}_k(\mathcal{D}) = \Z[H_2(M, L;\Z)] \langle \text{Crit}_{k}f \rangle, \quad k=0, \dots, n=\dim L,$$ where $\text{Crit}_k f$ is the set of critical points with Morse index $k$ and $\Z[G]$ is the group ring of a group $G$.  We write the Morse index of a critical point $x$ as $|x|$.

For a generic triple $\mathcal{D}$, the pearl differential is defined by
\begin{align*}
d\colon \oplus_k \mathcal{C}_{k}(\mathcal{D}) & \to \oplus_k \mathcal{C}_{k}(\mathcal{D})\\
\text{Crit f} \ni x & \mapsto \sum_{y \in \text{Crit } f}\big(\sum_{\substack{A \in H_2(M, L)\\|x|-|y|-1+\mu(A)=0}} \#(\mathcal{P}(x,y,A))A\big)y
\end{align*}
where $\#(\mathcal{P}(x,y,A))$ is the (signed) number of pearls in the homology class $A$ going from $x$ to $y$.  When $\mu(A)=0$, a pearl is simply a negative gradient flow line of $f$.  This morphism decomposes as a finite sum 
\begin{equation}\label{eq:dsum}
d = d_M + d_1 + \dots
\end{equation}
 where $d_M\colon \mathcal{C}_k \to \mathcal{C}_{k-1}$ is the Morse differential and $d_i\colon \mathcal{C}_{k} \to \mathcal{C}_{k-1 +iN_L}$ counts pearls of Maslov index $iN_L$.  Note that $d_i$'s are not differentials, they need not square to zero, even though $d^2=0$. 

Since $N_L$ is even, $k$ and $k-1 +i N_L$ have different parity, hence there is a 2-periodic pearl complex over $\Z[H_2(M, L)]$, defined by
$$\mathcal{C}_{[*]}(\mathcal{D})=\bigoplus_{k \equiv [*] \; \text{mod } 2} \mathcal{C}_k(\mathcal{D}), \quad [*]=0,1$$
with an induced differential $d\colon \mathcal{C}_{[*]} \to \mathcal{C}_{[*-1]}$.

The homology of this complex is called the quantum homology of $L$, denoted by $QH_{[*]}(L)$, or simply $QH(L)$.  It is independent of generic choices of $\mathcal{D}$.  If $QH(L) =0$, we say that $L$ is \textit{narrow}.

%

\subsection{Narrow representations and torsion}\label{sec:pearltorsion}
Fix a field $\mathbb{F}$ and a representation $\varphi\colon H_2(M, L) \to \mathbb{F}^\times = GL(1, \mathbb{F})$.  This induces a ring morphism (by convention, ring morphisms map 1 to 1)
$$\varphi\colon \Z[H_2(M, L)] \to \mathbb{F},$$
so that $\mathbb{F}$ becomes a $\Z[H_2(M, L)]$-module.  This defines a 2-periodic chain complex over $\mathbb{F}$ by setting
$$\mathcal{C}^{\varphi}_{[*]}(\mathcal{D})=\mathcal{C}_{[*]}(\mathcal{D})\otimes_{\Z[H_2(M, L)]} \mathbb{F}, \quad d^{\varphi} = d\otimes 1.$$
As above, the homology of this new complex, denoted by $QH^{\varphi}(L)$, does not depend on $\mathcal{D}$.  If it vanishes, we say that $L$ is $\varphi$-narrow, which implies $\chi(L;\F)=0$.
\begin{lem}\label{lem:narrow_implies_fnarrow}
If $QH(L) = 0$, then $QH^\varphi(L)=0$ for every representation $\varphi\colon H_2(M, L) \to \mathbb{F}^\times$.
\end{lem}
\begin{proof}
The complexes $\mathcal{C}_{[*]}(\mathcal{D})$ are free $\Z[H_2(M, L)]$-modules.  Since $QH(L)=0$, the boundaries $d(\mathcal{C}_{[*]}(\mathcal{D}))$ are projective $\Z[H_2(M, L)]$-modules, by Proposition \ref{prop:projective}.  By the universal coefficient theorem (see e.g.\ Weibel \cite[Theorem 3.6.1]{Weibel:book-hom-alg}), there is an exact sequence
$$\xymatrix{
0 \ar[r] & QH_{[*]}(L)\otimes_{\Z[H_2(M, L)]} \mathbb{F} \ar[r] & QH_{[*]}^\varphi(L) \ar[r] & \tor^{\Z[H_2(M, L)]}(QH_{[*-1]}(L), \mathbb{F}) \ar[r] & 0
}$$
The second and fourth term in this sequence are null, therefore $L$ is $\varphi$-narrow.
\end{proof}

The set of narrow representations of $L$ over $\mathbb{F}$ is defined by
$$\mathcal{N}(L, \mathbb{F})= \left\{ \varphi\colon H_2(M, L) \to \mathbb{F}^\times \; | \; L \text{ is } \varphi\text{-narrow} \right\}.$$
We also define
$$\mathcal{N}^{\text{free}}(L, \mathbb{F}) = \left\{ \varphi\colon H_2(M, L)/\tor \to \mathbb{F}^\times \; | \; L \text{ is } \varphi\text{-narrow} \right\}.$$
Picking a basis $z_1, \dots, z_{b_2(M, L)}$ of $H_2(M, L;\Z)/\tor$, we consider $\mathcal{N}^{\text{free}}(L, \mathbb{F})$ as a subset of $(\F^\times)^{b_2(M, L)}$.  It is in fact an open subset, by arguments of Biran-Cornea \cite[\S 3.1]{Bi-Co:lagtop}.

Given $\varphi \in \mathcal{N}(L, \mathbb{F})$ and $\mathcal{D}$ a generic set of data, there is a preferred basis for $\mathcal{C}^{\varphi}_{[*]}(\mathcal{D})$ given by $\text{Crit}_{[*]} f$.  Proceeding as in \S \ref{sec:periodictorsion}, we have:
\begin{dfn}\label{dfn:torsion}
The quantum Reidemeister torsion of the pair $(L, \varphi)$ is
$$\tau((L, \varphi), \mathcal{D}) = \tau_2(\mathcal{C}^{\varphi}_{[*]}(\mathcal{D}), \text{Crit}_{[*]} f) \in \K(\F).$$
It induces a function $\tau(L, \cdot, \mathcal{D}) \colon \mathcal{N}^{\text{free}}(L, \mathbb{F}) \to \K(\F).$
\end{dfn}

\begin{rems}\label{rem:nozgrading}
a.  Narrow representations do not always exist (for example, take any $L$ with non-vanishing Euler characteristic over $\F$). However, when $N_L=2$, narrow representations can be detected by computing partial derivatives of the Landau-Ginzburg superpotential, see Biran--Cornea \cite[\S 3.3]{Bi-Co:lagtop} for more on this.\\
\noindent b. It is possible to give a $\Z$-grading to the pearl complex by introducing a Novikov variable that keeps track of Maslov indices.  However, doing this makes the pearl complex unbounded, and the differential becomes periodic.  Thus, to define torsion in this context, one needs to take an infinite product of determinants which repeat themselves every multiple of $N_L$.  To avoid this type of issue, we chose to get rid of the Novikov  variable altogether and use a $\Z / 2\Z$-grading.
\end{rems}

\subsection{Invariance}
In this paper, we prove that $\tau((L, \varphi), \mathcal{D})$ is independent of $\mathcal{D}$, whenever $(L, \varphi)$ belongs to a certain class that we call $E^{1, \varphi}$-narrow and which contains tori endowed with a narrow representation, see \S \ref{sec:e1narrow}.  
\begin{remnonum}
There are other contexts in which the behaviour of torsion under changes of data has been studied.  See for example Hutchings \cite{Hut:torsion}, Hutchings--Lee \cite{HutLee:torsion} and Lee \cite{Lee:torsion1, Lee:torsion2} for finite/infinite-dimensional Morse-Novikov complex of circle-valued Morse functions.  In their context, torsion is not invariant, however its product with a zeta function encoding gradient periodic orbits is invariant.

Suarez \cite{Su:exactcob} studies torsion for exact Lagrangian cobordisms, Sullivan \cite{SullM:kinvariants} a version of the Lagrangian Floer complex, and Abouzaid-Kragh \cite{Abou-Kragh:simplehomtypenearby} study torsion for exact Lagrangians.  In these contexts, torsion is invariant.  Abouzaid-Kragh use the action filtration to prove this; our Theorem \ref{thm:specseq} is a generalization  of their methods to the non exact setting.
\end{remnonum}

\subsection{Oh's spectral sequence and a family of chain complexes associated to $d_1$}\label{sec:d1complex}
In this section, we will briefly need Novikov ring coefficients in order to define Oh's spectral sequence \cite{Oh:spectral} in Lagrangian Floer homology.  In the pearl context, this corresponds to the degree spectral sequence of Biran--Cornea \cite{Bi-Co:rigidity}.  We follow the presentation by Biran--Membrez \cite[Appendix A]{Bi-Me:cubic}.

Set $\Lambda = \Lambda_R = R[t, t^{-1}]$ the ring of Laurent polynomials in $t$.  We set $\deg t = |t|=- N_L$.  Here, $R$ is a ring, which will be either $\Z[H_2(M, L)]$, $\Z$ or a field $\mathbb{F}$. Set also 
$$P_i =
\begin{cases}
R t^{-i/N_L} \quad i \equiv 0 \mod N_L\\
0 \quad \text{otherwise}
\end{cases}
$$
Given a generic pearl triple $\mathcal{D} = (f, \rho, J)$, recall from formula (\ref{eq:dsum}) that $d = d_M + d_1 + d_2 + \dots$.  Squaring this, we get $0 = d_M^2 + d_M d_1 + d_1 d_M + d_M d_2 + d_1^2 + d_2 d_M + \dots$.  From this, we see that $d_1$ induces a map in Morse homology and this map squares to zero.  Expanding on these ideas, Oh obtained a spectral sequence in Floer homology, which in our context gives the following
\begin{thm}[The degree spectral sequence]\label{thm:specseq}
There is a spectral sequence of algebras $\{ E_{p,q}^r, d^r; \Lambda \}$, with $d^r$ of bidegree $(-r, r-1)$, called the degree spectral sequence, having the following properties:
\begin{itemize}
\item $E_{p,q}^0 = \Ccal_{p+q-pN_L}(\mathcal{D}) \otimes P_{pN_L}$, $d^0 = d_M \otimes 1$
\item $E_{p,q}^1 = H_{p+q-pN_L}(L; R)\otimes P_{pN_L}$, $d^1 = d_{1*}\otimes t$, where
$$d_{1*} \colon H_r(L;R) \to H_{r-1+N_L}(L;R)$$
is induced from $d_1$.
\item $d_{1*}$ satisfies the Leibniz rule with respect to the Morse intersection product, i.e.\ $d_{1*}(x \cdot y) = d_{1*}(x)\cdot y +(-1)^{n-|x|}x \cdot d_{1*}(y)$
\item As the differential has bidegree $(-r, r-1)$, $\{ E_{p,q}^r, d^r\}$ collapses after at most $\lfloor \frac{n+1}{N_L}\rfloor$ pages.  Moreover, it converges to $QH(L; \Lambda)$.  In particular, when $R$ is a field, we have
$$\oplus_{p+q=l}E_{p,q}^\infty \cong QH_l(L;\Lambda)$$
\end{itemize}
\end{thm}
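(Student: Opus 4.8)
The plan is to realize $\{E^r_{p,q},d^r\}$ as the spectral sequence of a filtered chain complex, exactly as in Oh \cite{Oh:spectral} and, in the pearl setting, Biran--Cornea \cite{Bi-Co:rigidity} and Biran--Membrez \cite{Bi-Me:cubic}. First I would use the Novikov variable to replace the $2$-periodic complex $\mathcal{C}_{[*]}(\mathcal{D})$ over $R$ by an honest $\Z$-graded complex $\widetilde{\mathcal{C}} = \mathcal{C}(\mathcal{D})\otimes_R\Lambda$, assigning to the generator $x\otimes t^{-i/N_L}$ (with $x\in\Crit f$ and $i\equiv 0\bmod N_L$) the degree $|x|+i$. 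Using the decomposition $d = d_M + d_1 + d_2 + \cdots$ from (\ref{eq:dsum}) together with $d_i\colon\mathcal{C}_k\to\mathcal{C}_{k-1+iN_L}$, one checks that $\widetilde d := d_M\otimes 1 + d_1\otimes t + d_2\otimes t^2 + \cdots$ is $\Lambda$-linear, squares to zero (because $d$ does, componentwise in Maslov weight), and is homogeneous of degree $-1$. Since only the Morse indices $0,\dots,n$ occur, $\widetilde{\mathcal{C}}$ is finitely generated in each degree over $R$.

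Next I would take the increasing \emph{energy filtration} $F_p\widetilde{\mathcal{C}}$ spanned by the $x\otimes t^j$ with $j\ge -p$; since $d_i\otimes t^i$ never decreases the $t$-exponent, $\widetilde d$ preserves this filtration, and the induced differential on $\mathrm{gr}_p\widetilde{\mathcal{C}}$ is $d_M\otimes 1$. The standard spectral sequence of a filtered complex then produces $\{E^r_{p,q},d^r\}$ with $d^r$ of bidegree $(-r,r-1)$; one computes directly that $\mathrm{gr}_p\widetilde{\mathcal{C}}$ in degree $p+q$ is $\mathcal{C}_{p+q-pN_L}(\mathcal{D})\otimes P_{pN_L}$, which gives the stated $E^0$ and $d^0$, and that its $d_M$-homology is Morse homology, $H_{p+q-pN_L}(L;R)\otimes P_{pN_L}$, which gives $E^1$ with $d^1$ induced by the next term $d_1\otimes t$, i.e.\ $d^1=d_{1*}\otimes t$. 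In each total degree the filtration has only finitely many nontrivial steps, so the convergence theorem for filtered complexes (e.g.\ \cite[Ch.~5]{Weibel:kbook}) applies and $E^\infty$ computes $H_*(\widetilde{\mathcal{C}},\widetilde d)=QH(L;\Lambda)$; over a field this is the isomorphism $\oplus_{p+q=l}E^\infty_{p,q}\cong QH_l(L;\Lambda)$. For the collapse, note that $E^0_{p,q}\ne 0$ forces $0\le p+q-pN_L\le n$, and the source and target Morse degrees of $d^r\colon E^r_{p,q}\to E^r_{p-r,q+r-1}$ differ by $rN_L-1$, so both can be nonzero only if $rN_L-1\le n$, that is $r\le\lfloor\frac{n+1}{N_L}\rfloor$; hence $d^r=0$ beyond that page.

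It remains to prove the Leibniz rule, which is the only step needing genuine input beyond formal homological algebra. Here I would use that the pearl complex carries a chain-level product $\star\colon\mathcal{C}(\mathcal{D})^{\otimes 2}\to\mathcal{C}(\mathcal{D})$ from \cite{Bi-Co:lagtop}, which decomposes by Maslov weight as $\star=\star_0+\star_1+\cdots$ with $\star_0$ the classical Morse intersection product and $\star_i$ of $t$-weight $i$, and which satisfies the chain-level Leibniz relation coming from the codimension-one boundary strata of the one-dimensional spaces of pearly trees with one output and two marked inputs. Thus $\widetilde{\mathcal{C}}$ is a filtered (not necessarily associative) DG algebra, the spectral sequence is multiplicative, each $E^r$ is an algebra, $d^r$ is a derivation, and on $E^1$ the product is exactly the Morse intersection product on $H_*(L;R)$; since $d_{1*}$ has odd degree $N_L-1$ and the intersection product has degree $-n$, the Koszul sign is $(-1)^{n-|x|}$, giving $d_{1*}(x\cdot y)=d_{1*}(x)\cdot y+(-1)^{n-|x|}x\cdot d_{1*}(y)$. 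I would invoke the compactness and gluing results of \cite{Bi-Co:lagtop} for the relevant moduli spaces rather than reprove them, and follow \cite[Appendix~A]{Bi-Me:cubic} for the precise bookkeeping. Establishing the chain-level product together with its filtered structure and coherent orientations, so that this sign comes out correctly, is the main obstacle; everything else is the routine machinery of filtered complexes.
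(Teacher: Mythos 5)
Your proposal is correct and is essentially the same argument the paper is implicitly relying on: the paper states this theorem without a proof of its own, pointing to Oh \cite{Oh:spectral}, Biran--Cornea \cite{Bi-Co:rigidity}, Biran--Membrez \cite[Appendix A]{Bi-Me:cubic} and (for the Leibniz rule) Buhovsky \cite{Bu:toriaudin}, and your filtered-complex construction with the energy filtration, together with the degree-counting collapse argument and the multiplicative structure for the Leibniz rule, is exactly what those sources do. The only cosmetic difference is that the paper credits the Leibniz property to Buhovsky, whose original proof is a direct moduli-space analysis rather than an appeal to multiplicativity of the spectral sequence, but the Biran--Cornea chain-level product gives the same result.
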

\begin{remnonum}
The Leibniz property of $d_{1*}$ was first proven by Buhosvky \cite{Bu:toriaudin}.
\end{remnonum}
We will sometimes indicate the Morse function and the triple $\mathcal{D}$ when writing Morse homology, e.g.\ $d^{\mathcal{D}}_{1*}\colon H_r^f(L; R) \to H_{r+N_L-1}^f(L; R)$.  From the theorem, we get $d_{1*}^2 =0$, thus for each $0 \leq k \leq N_L -2$, there is a chain complex of (not necessarily free!) $R$-modules
\begin{equation}\label{eq:d1complex}
\xymatrix{0 \ar[r] & H^f_k(L) \ar[r] & H^f_{k-1+N_L}(L) \ar[r] & \cdots \ar[r] & H_{k + n(N_L-1)}^f(L) \ar[r] & 0}
\end{equation}
We write this family of complexes as $(H_*(L), d_{1*})$.  Notice the particular case $N_L=2$, as there is a single complex:
\begin{equation*}
\xymatrix{0 \ar[r] & H_0(L) \ar[r]^{d_{1*}} & H_{1}(L) \ar[r] & \cdots \ar[r] & H_{n-1}(L) \ar[r]^{d_{1*}} & H_n(L) \ar[r] & 0}
\end{equation*}

We list below a few properties of this chain complex, which will be useful when computing torsion later on.  They all follow from the general machinery of \cite{Bi-Co:rigidity} and Theorem \ref{thm:specseq}.
\begin{itemize}
\item[(Naturality)] Given any ring morphism $\varphi\colon \Z[H_2(M, L)] \to \mathbb{F}$, $(E_{p,q}^r, d^r; \Lambda)$ is natural with respect to the associated change of coefficients.  The resulting spectral sequence is denoted by $(E_{*,*}^{r, \varphi}, d^{r, \varphi}; \mathbb{F})$.
\item[(Independence)]\label{indep} $d_{1*}$ is independent of $\mathcal{D}$.  Namely, first recall that given $\mathcal{D}_1=(f, \rho, J)$ and $\mathcal{D}_2=(g, \rho', J')$ two generic pearl data triples, there is a canonical comparison morphism $\Phi^{\mathcal{D}_1}_{\mathcal{D}_2}$ between the associated pearl complexes which induces a morphism of spectral sequence.  Moreover, this morphism induces, on the first page, yet another map which coincides with the homology value of the usual canonical Morse comparison morphisms (see Schwarz \cite{Sc:Morse-homology} for relevant definitions) $[\Phi^f_g]_* \colon H_*^f(L) \to H_*^g(L)$ between the respective Morse homologies of $f$ and $g$, where we omit Riemannian metrics from the notation, and makes the following diagram commute (over $\Z$ or for any field representation of $H_2(M, L)$):
\begin{equation}\label{eq:d1commuteMorse}
\xymatrix{ H_*^f(L) \ar[rr]^{d^{\mathcal{D}_1}_{1*}} \ar[d]_{[\Phi^f_g]_*} & & H_{*-1+N_L}^f(L) \ar[d]^{[\Phi^f_g]_*}\\
H_*^g(L) \ar[rr]^{d_{1*}^{\mathcal{D}_2}} & & H_{*-1+N_L}^g(L)}
\end{equation}
\end{itemize}

Using the spectral sequence, one gets:
\begin{thm}[\cite{Bu:toriaudin}, \cite{Bu:toripreprint} (Proof of Theorem 1)]\label{thm:buhovsky}
Let $L^n$ be a monotone Lagrangian submanifold whose Morse homology ring is generated by $H_{n-1}(L; \F)$ and assume that $L$ is $\varphi$-narrow for some representation $\varphi\colon H_2(M, L) \to \mathbb{F}^\times$.  Then $N_L =2$ and the chain complex $(H_*(L; \mathbb{F}), d^{\varphi}_{1*})$ is acyclic.  In other words, $0 = E_{*,*}^{2, \varphi} = E_{*,*}^{3, \varphi} = \dots = E_{*,*}^{\infty, \varphi}$.
\end{thm}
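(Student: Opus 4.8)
The plan is to run Buhovsky's argument \cite{Bu:toriaudin}, whose engine is the multiplicative structure of the degree spectral sequence combined with the hypothesis that $H_*(L;\mathbb{F})$ is generated, as a ring under the Morse intersection product, by $H_{n-1}(L)$. First I record what $\varphi$-narrowness gives: $QH^\varphi(L)=0$ forces $QH^\varphi(L;\Lambda)=0$ (tensor with the free $\mathbb{F}$-module $\Lambda$), so by Naturality the $\varphi$-twisted degree spectral sequence $(E^{r,\varphi}_{*,*},d^{r,\varphi})$ converges to $0$. The structural input I would import from \cite{Bi-Co:rigidity,Bu:toriaudin} is that this spectral sequence is multiplicative: each page $E^{r,\varphi}$ is a graded-commutative $\mathbb{F}$-algebra, $d^{r,\varphi}$ a derivation, and $E^{r+1,\varphi}=H(E^{r,\varphi})$ as algebras; on the first page this is $H_*(L;\mathbb{F})\otimes_{\mathbb{F}}\Lambda$ with the intersection product and $d^{1,\varphi}=d^\varphi_{1*}\otimes t$, with the Leibniz rule of Theorem \ref{thm:specseq}. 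Reindexing by cohomological degree $H^k:=H_{n-k}(L;\mathbb{F})$, the algebra $H^*(L;\mathbb{F})$ is generated by $H^1=H_{n-1}(L)$ and the unit $[L]\in H^0$, so $E^{1,\varphi}$ is generated by elements of cohomological degree $\le 1$ (together with the units $t^{\pm1}\in H^0\otimes\Lambda$), while $d^{r,\varphi}$ shifts cohomological degree by $1-rN_L$. Since a derivation that kills a generating set of an algebra vanishes identically, $d^{r,\varphi}$ vanishes on any page still generated in cohomological degree $\le 1$ as soon as $1-rN_L\le-2$.

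The proof now splits. If $N_L\ge 3$, hence $N_L\ge 4$ since $N_L$ is even, already $d^{1,\varphi}$ shifts cohomological degree by $1-N_L\le-3$, so it kills the degree-$\le 1$ generators and vanishes; then $E^{2,\varphi}=E^{1,\varphi}$, inductively every $d^{r,\varphi}=0$, and $E^{\infty,\varphi}=E^{1,\varphi}=H_*(L;\mathbb{F})\otimes\Lambda\ne0$, contradicting $QH^\varphi(L;\Lambda)=0$. The same argument applies if $N_L=2$ but $d^\varphi_{1*}$ vanishes on $H_{n-1}(L)$: being a derivation, $d^\varphi_{1*}$ then vanishes on all of $H_*(L;\mathbb{F})$, so $d^{1,\varphi}=0$, and for $r\ge 2$ the differential $d^{r,\varphi}$ shifts cohomological degree by $1-2r\le-3$; again $E^{\infty,\varphi}=E^{1,\varphi}\ne0$, a contradiction. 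Hence $N_L=2$ and $d^\varphi_{1*}$ is nonzero on $H_{n-1}(L)$; since it maps $H_{n-1}(L)$ into $H_n(L)=\mathbb{F}\cdot[L]$, after rescaling I may pick $\xi_0\in H_{n-1}(L;\mathbb{F})$ with $d^\varphi_{1*}\xi_0=[L]$. The Leibniz rule gives $d^\varphi_{1*}(\xi_0^2)=[L]\cdot\xi_0-\xi_0\cdot[L]=0$ (the sign $(-1)^{n-(n-1)}=-1$ is what makes this hold in every characteristic), so $\xi_0^2\in\ker d^\varphi_{1*}$. Set $K=\ker(d^\varphi_{1*}|_{H_{n-1}})$ and let $B\subseteq H_*(L;\mathbb{F})$ be the subalgebra generated by $K\cup\{\xi_0^2\}$; then $B\subseteq\ker d^\varphi_{1*}$, and writing any monomial in a basis of $H_{n-1}(L)$ as $\xi_0^a m$ with $m$ a monomial in $K$ gives $H_*(L;\mathbb{F})=B+\xi_0\cdot B$.

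The last step is a direct computation of $E^{2,\varphi}$. For $b,b'\in B\otimes\Lambda$ the Leibniz rule gives $d^{1,\varphi}(b+\xi_0 b')=t\,b'$, using $d^\varphi_{1*}|_B=0$ and $d^\varphi_{1*}(\xi_0 v)=[L]\cdot v=v$ for $v\in B$; consistency of this identity also forces multiplication by $\xi_0$ to be injective on $B$ (if $\xi_0 v=0$ with $v\in B$ then $v=d^\varphi_{1*}(\xi_0 v)=0$) — this is exactly what would fail for a hypothetical $L=\R P^2$, consistent with the fact that such an $L$ has $\chi(L)\ne0$ and is never narrow. Because $t$ is a unit in $\Lambda$ it follows that $\ker d^{1,\varphi}=B\otimes\Lambda=t\cdot(B\otimes\Lambda)=\operatorname{im}d^{1,\varphi}$, hence $E^{2,\varphi}=0$ and therefore $0=E^{2,\varphi}=E^{3,\varphi}=\cdots=E^{\infty,\varphi}$. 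Since $E^{2,\varphi}=0$ means exactly that the $\varphi$-twisted complex (\ref{eq:d1complex}) is acyclic — and for $N_L=2$ this is the single complex $0\to H_0(L)\to H_1(L)\to\cdots\to H_n(L)\to0$, i.e.\ $(H_*(L;\mathbb{F}),d^\varphi_{1*})$ — this is the assertion of the theorem.

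I expect the main obstacle, beyond this degree bookkeeping, to be the justification of the multiplicative structure on the higher pages of the degree spectral sequence: the excerpt records the Leibniz rule only for $d_{1*}$ on the second page, whereas the reduction to $N_L=2$ uses that every $d^{r,\varphi}$ is a derivation of the ring $E^{r,\varphi}$. I would supply this from \cite{Bi-Co:rigidity} — the quantum product respects the degree filtration, so it induces a multiplicative structure on the whole spectral sequence — and from \cite{Bu:toriaudin}.
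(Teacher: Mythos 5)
The paper does not prove this statement; it is cited to Buhovsky, so there is no internal proof to compare against. Your reconstruction is, however, a faithful rendering of Buhovsky's argument and I believe it is correct. The two pillars are exactly right: (i) narrowness forces $E^{\infty,\varphi}=0$, and any differential that, by multiplicativity, kills the degree-$\le1$ generators must vanish, which together rule out $N_L\ge4$ and rule out $d^\varphi_{1*}|_{H_{n-1}}=0$ when $N_L=2$; and (ii) once $d^\varphi_{1*}$ hits $[L]$ the Leibniz rule trivializes $(H_*(L;\mathbb{F}),d^\varphi_{1*})$. You also correctly flag the one external input that the paper's Theorem \ref{thm:specseq} does not quite deliver: for the reduction to $N_L=2$ you need every page $E^{r,\varphi}$ to be a graded-commutative algebra with $d^{r,\varphi}$ a derivation, not only the Leibniz rule for $d_{1*}$. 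That multiplicative structure on all pages does hold (the quantum product is filtered, see \cite{Bi-Co:rigidity}), but a complete write-up should make this explicit rather than borrowing it silently, precisely because the induction ``$E^{r+1}=E^r$ still generated in cohomological degree $\le1$'' is used repeatedly.

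Two small remarks. First, your step constructing $B=\langle K,\xi_0^2\rangle$ and proving $H_*(L;\mathbb{F})=B\oplus\xi_0 B$ can be compressed: once $d^\varphi_{1*}(\xi_0)=[L]$, the Leibniz rule gives
\[
d^\varphi_{1*}(\xi_0\cdot x)+\xi_0\cdot d^\varphi_{1*}(x)=[L]\cdot x=x
\]
for \emph{every} $x$, so $x\mapsto\xi_0\cdot x$ is already an explicit chain contraction of $(H_*(L;\mathbb{F}),d^\varphi_{1*})$, with no need to exhibit a direct-sum decomposition; this is precisely the mechanism behind Lemma \ref{lem:splittingofd1} in the paper. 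Second, your identification of $E^{2,\varphi}=0$ with acyclicity of $(H_*(L;\mathbb{F}),d^\varphi_{1*})$ uses flatness of $\Lambda$ over $\mathbb{F}$, which is fine, but is worth stating since the pages are $\Lambda$-modules while the complex (\ref{eq:d1complex}) is over $\mathbb{F}$. Neither remark is a gap; the argument as written goes through.
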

\begin{rem}
Lagrangians with a homology ring as above are for example tori or $S^1 \times \Sigma_g$.  See Corollary \ref{cor:assumptionstar}.
\end{rem}

\subsection{Algebraic structure of the pearl boundary modules}\label{sec:pearlboundary}
In view of the discussion on stably-free modules in \S \ref{sec:whiteheadtorsion}, we were asking in Question \ref{qu:stablyfree} whether the pearl boundary modules are stably free.  As of now, we cannot answer this question positively, neither can we find a counterexample. Note that the counterexamples of \S \ref{sec:whiteheadtorsion} were purely algebraic and did not make use of geometry.  Nevertheless, we can make a first step in the right direction:
\begin{prop}\label{prop:projective}
Assume that $L$ is narrow, $QH(L) = 0$.  Then, the pearl boundary modules $b_{[i]}$ are projective $\Z[H_2(M, L)]$-modules.
\end{prop}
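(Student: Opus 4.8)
The plan is to exploit the $\Z/2\Z$-periodicity of the pearl complex: it produces an infinite, $2$-periodic free resolution of each boundary module, so that projectivity of $b_{[0]},b_{[1]}$ reduces to the vanishing of a single $\mathrm{Tor}_1$ group against the residue fields of $R:=\Z[\pi_2(M,L)]$, and that $\mathrm{Tor}_1$ group turns out to be exactly a twisted quantum homology group $QH^{\varphi}(L)$, which is zero because $L$ is narrow.

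First I would record the algebraic setup. As $\pi_2(M,L)$ is abelian, $R$ is commutative. Narrowness, $QH(L)=0$, says the $2$-periodic complex of finitely generated free $R$-modules $\mathcal{C}_{[0]}(\mathcal{D})$, $\mathcal{C}_{[1]}(\mathcal{D})$ of \S\ref{sec:2pearl} is acyclic, so $\ker d=\mathrm{im}\,d$ in both directions and we obtain two short exact sequences
\[
0\to b_{[1]}\to \mathcal{C}_{[1]}\xrightarrow{\,d\,} b_{[0]}\to 0,
\qquad
0\to b_{[0]}\to \mathcal{C}_{[0]}\xrightarrow{\,d\,} b_{[1]}\to 0 .
\]
In particular $b_{[j]}\cong\mathrm{coker}\bigl(d\colon\mathcal{C}_{[j]}\to\mathcal{C}_{[j+1]}\bigr)$ is finitely presented. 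Splicing these two sequences alternately yields, for $b_{[1]}$, the $2$-periodic free resolution
\[
\cdots\xrightarrow{\,d\,}\mathcal{C}_{[1]}\xrightarrow{\,d\,}\mathcal{C}_{[0]}\xrightarrow{\,d\,}\mathcal{C}_{[1]}\xrightarrow{\,d\,}\mathcal{C}_{[0]}\to b_{[1]}\to 0 ,
\]
and symmetrically for $b_{[0]}$; in particular each $b_{[j]}$ has projective dimension $0$ or $\infty$, and the whole point is to rule out the infinite case. Tensoring such a resolution with an arbitrary $R$-module $N$ and reading off homology in degree $1$ identifies $\mathrm{Tor}_1^{R}(b_{[j]},N)$ with the degree-$[j]$ homology of $\mathcal{C}_{[*]}(\mathcal{D})\otimes_{R}N$.

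The geometric input enters next. Fix a maximal ideal $\mathfrak m\subset R$ and let $\varphi\colon R\to \mathbb{F}:=R/\mathfrak m$ be the quotient ring morphism onto the residue field. By the observation recalled in \S\ref{sec:pearltorsion} that $QH(L)=0$ forces $QH^{\varphi}(L)=0$, the complex $\mathcal{C}^{\varphi}_{[*]}(\mathcal{D})=\mathcal{C}_{[*]}(\mathcal{D})\otimes_{R}\mathbb{F}$ is acyclic, hence $\mathrm{Tor}_1^{R}(b_{[j]},\mathbb{F})=0$ for $j=0,1$. Localising at $\mathfrak m$ --- an exact functor commuting with $\mathrm{Tor}$ --- gives $\mathrm{Tor}_1^{R_{\mathfrak m}}\bigl((b_{[j]})_{\mathfrak m},\,R_{\mathfrak m}/\mathfrak m R_{\mathfrak m}\bigr)=0$. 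Since $(b_{[j]})_{\mathfrak m}$ is finitely presented over the local ring $R_{\mathfrak m}$, the standard Nakayama argument --- lift a residue-field basis of $(b_{[j]})_{\mathfrak m}/\mathfrak m(b_{[j]})_{\mathfrak m}$ to a minimal generating set; the kernel $K$ of the resulting surjection from a finite free module $R_{\mathfrak m}^{a}$ lies in $\mathfrak m R_{\mathfrak m}^{a}$, is finitely generated, and satisfies $K/\mathfrak m K\cong \mathrm{Tor}_1^{R_{\mathfrak m}}\bigl((b_{[j]})_{\mathfrak m},R_{\mathfrak m}/\mathfrak m R_{\mathfrak m}\bigr)=0$, whence $K=0$ --- shows $(b_{[j]})_{\mathfrak m}$ is free. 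As $\mathfrak m$ was arbitrary, $b_{[j]}$ is finitely presented, and a finitely presented module over the commutative ring $R$ that is free at every maximal localisation is projective, we conclude that $b_{[0]}$ and $b_{[1]}$ are projective $R$-modules.

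The step I expect to be the crux is the passage from acyclicity over $R$ to acyclicity over the residue fields: this cannot follow from acyclicity alone, since for the $\Z[\Z/p\Z]$ example of \S\ref{sec:whiteheadtorsion} the boundary modules are not even projective and the base-changed complex fails to remain acyclic. Here the needed input is the geometric fact that narrowness is inherited by every field representation of $\pi_2(M,L)$. I also expect commutativity of $R$ --- i.e.\ the hypothesis that $\pi_2(M,L)$ is abelian --- to be genuinely used, since the localisation step has no obvious substitute for noncommutative group rings; this is exactly why Question~\ref{qu:stablyfree} stays open in general, and note that the argument above yields only projectivity, not the stably-free conclusion asked for there.
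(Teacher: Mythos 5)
Your proof is correct, but it takes a genuinely different route from the paper's. The paper gives a direct, one-idea argument: since $\pi_2(M,L)$ is abelian, the quantum product $\circ\colon \Ccal(g)\otimes\Ccal(f)\to\Ccal(f)$ is available over $R=\Z[\pi_2(M,L)]$, and narrowness produces a chain $\sigma\in\Ccal(g)$ with $d_g(\sigma)=L_g$, the chain-level unit. The Leibniz rule then makes left multiplication by $\sigma$ an explicit chain-level splitting of $d_f$, so that $\Ccal(f)\cong \mathrm{im}\,d\oplus\ker d$ and projectivity of the boundary modules drops out immediately. Your argument instead goes through homological algebra: the $2$-periodic free resolution, the identification of $\mathrm{Tor}_1^R(b_{[j]},-)$ with twisted quantum homology, localisation at maximal ideals, and Nakayama to get local freeness, then the standard finitely-presented-plus-locally-free criterion. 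Both arguments are valid, and both deliver exactly projectivity (not stable freeness), as you observe.

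Two points worth flagging. First, your route leans on the assertion in \S\ref{sec:pearltorsion} that $QH(L)=0$ forces $QH^{\varphi}(L)=0$ for every field representation $\varphi$. That assertion is not a consequence of acyclicity over $R$ alone (as your own $\Z[\Z/p\Z]$ counterexample shows), and the natural way to prove it is again via the chain-level unit and the quantum product: $L_g=d\sigma$ over $R$ base-changes to $L_g^{\varphi}=d^{\varphi}\sigma^{\varphi}$, killing the unit of $QH^{\varphi}(L)$. So while your proof avoids \emph{invoking} the product explicitly, the geometric input it depends on is proved by the same mechanism the paper uses directly; the two arguments are not independent at the level of geometric content, and your proof is correspondingly longer. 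Second, and in your favour: the localisation route cleanly isolates \emph{where} commutativity enters (the passage to local rings and residue fields), which nicely explains why Question~\ref{qu:stablyfree} stays open in the non-abelian case --- the paper's proof also uses commutativity, but only as a precondition for the product to exist, which is a less structural way of seeing the obstruction.
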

\begin{proof}
The proof uses the quantum product over the ring $\Z[H_2(M, L)]$ (see Biran--Cornea \cite{Bi-Co:rigidity}).  Recall that given any generic pearl triple $\mathcal{D}=(f, \rho, J)$, there is another generic triple $\mathcal{D}'=(g, \rho', J')$ with the following properties:
\begin{itemize}
\item The Morse function $g$ has a single maximum, denoted by $L_g$.
\item There is a quantum product (over the ring $\Z[H_2(M, L)]$)
$$\circ\colon \Ccal_{[k]}(g, \rho', J') \otimes_{\Z[H_2(M, L)]} \Ccal_{[l]}(f, \rho, J) \to \Ccal_{[k+l-n]}(f, \rho, J)$$
which endows Lagrangian quantum homology with a unital ring structure and satisfies the Leibniz rule with respect to the pearl differentials.  Moreover, $L_g$ is a \textit{chain-level} unit for this product, i.e.\ $L_g \circ x = x, \; \forall x \in \Ccal(f, \rho, J)$.
\end{itemize}
We denote by $d_f, d_g$ the pearl differentials of each pearl complex.  Note that $d_g(L_g) = 0 $ for degree reasons (see (\ref{eq:dsum})).  Since $L$ is narrow, this means that there exists $\sigma \in \Ccal_{[n-1]}(g)$ such that $d_g(\sigma) = L_g$.  Assume now that $y=d_f(x)$, then $d_f(\sigma \circ y) = d_g(\sigma)\circ y +(-1)^{n-|\sigma|}\sigma\circ d_f(y) = L_g \circ y  + (-1)^{n-|\sigma|}\sigma\circ d_f^2(x)= y$.  Therefore, left multiplication with $\sigma$ provides a splitting of $d_f$, which implies that $\Ccal(f) \cong \text{Im } d \oplus \ker d$, hence the boundary modules are projective.
\end{proof}

\section{Open Gromow-Witten invariants and the differential $d_{1*}^\varphi$}\label{sec:openGW}
In this section, we introduce the notion of open Gromov-Witten invariants that appears in Theorem \ref{thm:first}.  Contrary to absolute Gromov-Witten invariants, defined using holomorphic spheres (see e.g.\ McDuff-Salamon \cite{McD-Sa:Jhol-2}), care must usually be taken in the open case, as bubbling of discs is a codimension one phenomenon.  In our context however, this does not happen and the definition is rather straightforward.  For a broader view of the subject, where bubbling issues are present, the reader is referred to Biran-Cornea \cite{Bi-Co:lagtop}, Georgieva-Zinger \cite{Georg-Zinger:realGWtheorygenera}, Liu \cite{Liu:openGW}, Solomon \cite{Solomon:intersection, Solomon:thesis}, Solomon-Tukachinsky \cite{Sol-Tuk:openGWbounding}, Welschinger \cite{Welsch:discs4dim}, etc.\

Fix $A \in H_2(M, L)$ such that $\mu(A) = N_L$, $J$ an almost complex structure compatible with $\omega$ and denote by $\mathcal{M}(A;J) = \{ u\colon (D^2, S^1) \to (M, L) \; | \; \bar{\partial}_J u = 0, \; [u] = A \}$ the space of $J$-holomorphic discs in the homology class $A$.  By the work of Lazzarini \cite{Laz:decomp}, these are automatically simple curves, as $\mu(A)  = N_L$ and $L$ is monotone.  Therefore, for generic $J$, $\mathcal{M}(A;J)$ is a smooth manifold of dimension $\dim L+\mu(A) = n+\mu(A)$.  Our implicit choice of spin structure on $L$ induces an orientation on this space, by the work of Fukaya-Oh-Ohta-Ono \cite[Chapter 8]{FO3:book-vol2}.  Moreover, by monotonicity and Gromov compactness for discs (see Frauenfelder \cite{Fr:msc}), $\mathcal{M}(A;J)$ is a closed manifold and its bordism class does not depend on $J$.  In other words, given two regular $J_0, J_1$ and a generic path $\mathbf{J} = \{ J_t, \; t \in [0,1] \}$ between them, there is a smooth, oriented, closed cobordism between  $\mathcal{M}(A;J_0)$ and $\mathcal{M}(A;J_1)$, given by $\bigcup_{t \in [0,1]} \mathcal{M}(A;J_t)$.  

There is an evaluation map
\begin{align*}
ev_2\colon \mathcal{M}(A;J) \times ((S^1)^2 \backslash \Delta) / \mathcal{G} & \to L \times L\\
(u, \theta_1, \theta_2)&  \mapsto (u(\theta_1), u(\theta_2))
\end{align*}
where $\Delta$ is the diagonal and $\mathcal{G}$ is the group of biholomorphism of the disc acting via the formula $g \cdot (u, \theta_1, \theta_2)= (u \circ g, g^{-1}(\theta_1), g^{-1}(\theta_2))$.
Note that $\mathcal{M}(A;J) \times ((S^1)^2 \backslash \mathbf{\Delta})$ is a non compact manifold of dimension $n +\mu(A) +2  = n + 2 + N_L$. It represents the space of $J$-holomorphic discs in class $A$ with two distinct marked points.

Now, let $\mathcal{D}=(f, \rho, J)$ be a generic pearl data triple.  Denote by  $\psi_t$ the negative gradient flow of $f$ with respect to $\rho$.  Let $x \in \Crit f$, and define $W^u(x) = \{ p \in L \; | \; \lim_{t \to -\infty} \psi_t(p) = x\}$ the unstable manifold of $x$; it is a non proper submanifold of $L$.  Similarly, $W^s(x) = \{ p \in L \; | \; \lim_{t \to \infty} \psi_t(p) = x\}$ is the stable manifold of $x$.

Fix $x, y \in \Crit f$ such that $|y| = |x|-1 + N_L$.  By genericity of $\mathcal{D}$, the map $ev_2$ is transverse to $W^u(x) \times W^s(y)$.  Therefore,
\begin{align*}
\mathcal{P}(x, y, A; \mathcal{D}) = ev_2^{-1}(W^u(x) \times W^s(y))
\end{align*}
is a zero dimensional manifold; it is the set of pearls from $x$ to $y$ in the homology class $A$.  By the results of Biran-Cornea \cite[\S 3.4]{Bi-Co:rigidity}, \cite[Appendix A]{Bi-Co:lagtop}, it is in fact a compact orientable manifold, hence there is a well-defined integer $\# \{ \mathcal{P}(x, y, A; \mathcal{D}) \}$.  Recall the decomposition $d = d_M + \sum_i d_i$ in (\ref{eq:dsum}).  Then, over $\Z[H_2(M, L)]$,
\begin{equation}\label{eq:d1phi}
d_1(x) = \sum_y (\sum_{\mu(A)=N_L} \# \{\mathcal{P}(x, y, A; \mathcal{D})\} A) y 
\end{equation}
By definition, $\mathcal{P}(x, y, A; \mathcal{D})$ also represents the set of $J$-holomorphic discs in homology class $A$, with two distinct boundary marked points intersecting respectively $W^u(x)$ and $W^s(y)$.

Fix $x,y \in H^f_*(L; \Z)$ two Morse homology classes in the free part of $H_*^f(L;\Z)$, such that $|y| = |x|-1 + N_L$.  Write $x = \sum a_i x_i$, $y = \sum b_i y_i$, with $a_i, b_i \in \Z$, $x_i, y_i \in \Crit f$.
\begin{dfn}
Let $L$ be a closed, orientable, spin, monotone Lagrangian submanifold of $(M, \omega)$ and $A \in H_2(M, L; \Z)$ be such that $\mu(A) = N_L$. Fix $\mathcal{D} = (f, \rho, J)$ a generic pearl data triple and $x, y \in H^f(L;\Z)$ satisfying $|y| = |x|-1 + N_L$.  The number $$GW^A_{0,2}(x, y) = \sum_{i, j} a_i b_j \#\{ \mathcal{P}(x_i, y_j, A; \mathcal{D}) \} \in \Z$$
is called a genus zero open Gromov-Witten invariant in class $A$, intersecting the cycles $x$ and $y$.
\end{dfn}

Let $\varphi\colon H_2(M, L;\Z) \to \F^\times$ be a representation.  By formula (\ref{eq:d1phi}) above, we get the following interpretation of $d_{1*}$ and $d_{1*}^\varphi$ from \S \ref{sec:d1complex}:
\begin{prop}\label{prop:d1openGW}
The differentials on the first page of Oh's spectral sequences $E^1, E^{1, \varphi}$ are expressed in terms of open Gromov-Witten invariants of minimal Maslov index, with two marked points.  Given $x \in H_{|x|}(L;\Z)$ or $x \in H_{|x|}(L;\F)$, then
$$d_{1*}(x) = \sum_{y \in H_{|x| + N_L - 1}} \left(\sum_{\mu(A)=N_L} GW_{0,2}^A(x, y)A \right) y$$
$$d_{1*}^\varphi(x) = \sum_{y \in H_{|x| + N_L - 1}} \left(\sum_{\mu(A)=N_L} GW_{0,2}^A(x, y)\varphi(A)\right) y$$
\end{prop}
Using the commutative diagram (\ref{eq:d1commuteMorse}), which ultimately relies on the compactness results of \cite[\S 3.4]{Bi-Co:rigidity}, one obtains:
\begin{cor}
$GW^A_{0,2}(x, y)$ does not depend on the decompositions $x = \sum a_i x_i$, $y = \sum b_i y_i$.  Moreover, it is independent of the choice of generic pearl data triple $\mathcal{D}(f, \rho, J)$.
\end{cor}

\section{Quantum torsion for $E^{1, \varphi}$-narrow Lagrangians}\label{sec:e1narrow}
Motivated by Buhovsky's Theorem \ref{thm:buhovsky}, we introduce the following class of pairs (Lagrangian, representation of $H_2(M, L)$), which contains $\varphi$-narrow tori:
\begin{dfn}\label{def:e1narrow}
Let $(L, \varphi)$ be a pair where $L$ is a closed, orientable and spin monotone Lagrangian (and assume these choices are fixed), and $\varphi \in \mathcal{N}(L, \mathbb{F})$ is a narrow representation.  Then $(L, \varphi)$ is called $E^{1, \varphi}$-narrow if the associated degree spectral sequence satisfies $H(E^{1, \varphi}_{*,*}, d^{1, \varphi}) \cong E_{*,*}^{2, \varphi} = 0$.
\end{dfn}
Recall that $d_{1*}^\varphi([L])=0$ for degree reasons.  Then, we see from the results in \S \ref{sec:d1complex} that $E_{*,*}^{2, \varphi}=0$ if and only if any of the following equivalent conditions hold:
\begin{itemize}
\item The unit $[L] \in H_n(L; \mathbb{F})$ for the Morse intersection product is in the image of $d_{1*}^\varphi$.
\item The family of chain complexes given by $(H_*(L;\F), d_{1*}^\varphi)$ (see (\ref{eq:d1complex})) is acyclic.
\end{itemize}
Assume that $\text{char } \F \nmid |\tor(L)|$.  For an $E^{1, \varphi}$-narrow pair $(L, \varphi)$ as above, each choice of pearl data $\mathcal{D} = (f, \rho, J)$ yields a family of equivalent bases $h^f_* \otimes_\F 1$ of $H^f_*(L;\mathbb{F})$, given by tensoring any basis of the free part of $H^f_*(L; \Z)$ (which are all equivalent for a fixed $f$) with $\F$, as explained in \S \ref{sec:milnorsdef}.  Since the complex (\ref{eq:d1complex}) is acyclic by assumption, its torsion relative to this equivalence class of basis is defined and we denote it by $\tau(E^{1, \varphi}, \mathcal{D}, h^f_* \otimes 1)$.  The choice of pearl data triple is also irrelevant:
\begin{lem}
Assume $(L, \varphi)$ is $E^{1, \varphi}$-narrow and suppose that $\text{char } \F \nmid |\tor(L)|$.  Given $\mathcal{D}_1, \mathcal{D}_2$ two generic pearl data triples, we have $\tau(E^{1, \varphi}, \mathcal{D}_1, h^{f_1}_*\otimes 1) = \tau(E^{1, \varphi}, \mathcal{D}_2, h^{f_2}_*\otimes 1)$.
\end{lem}
\begin{proof}
First observe that in the commutative diagram (\ref{eq:d1commuteMorse}), the Morse comparison morphisms over $\Z$ and $\F$ are related by the formula $[\Phi^{f_1}_{f_2}]_\F = [\Phi^{f_1}_{f_2}]_\Z \otimes_\F 1$, by assumption on the characteristic of $\F$.  In other words, a basis of the form $h^{f_1}_* \otimes 1$ is mapped by the Morse comparison morphism to a basis $h^{f_2}_* \otimes 1$.  Finally, writing the comparison morphisms in those bases, we get matrices such that $\det ([\Phi^{f_1}_{f_2}]_\Z \otimes_\F 1) = \det [\Phi^{f_1}_{f_2}]_\Z = \pm 1$, hence changing the Morse function does not change the value of torsion. 
\end{proof}
Therefore, it makes sense to speak of the torsion of the acyclic complex (\ref{eq:d1complex}) with respect to the equivalence class of $h_* \otimes 1$, which we denote by $\tau(E^{1, \varphi}, h_* \otimes 1) \in \K(\F)$. The following theorem gives a useful tool to compute quantum Reidemeister torsion, as well as to prove its invariance, and is the reason we introduced the class of $E^{1, \varphi}$-narrow pairs $(L, \varphi)$.  Compare with Abouzaid-Kragh \cite[Lemma 2.1]{Abou-Kragh:simplehomtypenearby}. Recall from \S \ref{sec:pearltorsion} that $\tau(L, \varphi)$ induces a function $$\tau(L, \cdot)\colon \{ \varphi \in \mathcal{N}^{\text{free}}(L, \F) \} \to \K(\F),$$ which depends on variables $z_1, \dots, z_{b_2(M, L)}$.  Moreover, the representations $\varphi$ such that $(L, \varphi)$ is $E^{1, \varphi}$-narrow form an open subset of $\mathcal{N}^{\text{free}}$, since their complement is given by the vanishing of the map $d_{1*}^\varphi \colon H_{n+1-N_L}(L;\F) \to H_n(L;\F) \cong \F$.
\begin{thm}\label{thm:main}
Let $L$ be a closed, monotone, orientable, spin Lagrangian and $\varphi \in \mathcal{N}(L, \mathbb{F})$ a narrow representation, where $\text{char } \F \nmid |\tor(L)|$.  Suppose that $(L, \varphi)$ is an $E^{1, \varphi}$-narrow pair.  Then the quantum Reidemeister torsion of $(L, \varphi)$ is independent of the choice of generic pearl data $\mathcal{D} = (f, \rho, J)$ and satisfies
$$\tau(L, \varphi) = \tau((L, \varphi), \mathcal{D}) = \dfrac{|\tor_{\text{ev}}(L)|}{|\tor_{\text{odd}}(L)|}\tau(E^{1, \varphi}, h_* \otimes 1) = \tau(E^{0, \varphi}, \text{Crit}_* f, h_*\otimes 1)\tau(E^{1, \varphi}, h_*\otimes 1)$$
where $\tau(E^{0, \varphi}, \text{Crit}_* f, h_*\otimes 1)$ is Milnor's torsion of $E^{0, \varphi}$.
Moreover, the function $\tau(L, \cdot)$ is a rational function whose coefficients are expressed in terms of genus zero open Gromov-Witten invariants of $L$.
\end{thm}
\begin{proof}
Assume first that $N_L=2$.  We deal with the general case at the end of Step 1 below.  We also fix $\mathcal{D}=(f, \rho, J)$ a generic pearl data triple and $\varphi \in \mathcal{N}(L, \mathbb{F})$ a narrow representation.  

Recall from \S \ref{sec:pearltorsion} that we compute Reidemeister torsion $\tau((L, \varphi), \mathcal{D})$ with respect to the equivalence class of bases $\{ c_i\}$ given by $\text{Crit}_{*}(f)$.  We will show that one can change this basis to a more manageable one without affecting the torsion.

\newcounter{Stepcounter}
\stepcounter{Stepcounter}
\noindent\textit{Step \arabic{Stepcounter}: \stepcounter{Stepcounter}}\\
Recall from (\ref{eq:dsum}) that the differential decomposes as a sum $d^\varphi =: d = d_M^\varphi + d_1^\varphi + \dots$, where $d_i^\varphi \colon \Ccal_k \to \Ccal_{k-1+2i}$, since we assume $N_L=2$.  Moreover, $d_M^\varphi$ is simply the reduction in $\F$ of the count of negative gradient flow lines between two critical points of consecutive index, induced by $\varphi(1_\Z)=1_\F$, hence it computes Morse homology $H_*^f(L;\F)$.  In matrix form, $d\colon \Ccal_{[i]} \to \Ccal_{[i-1]}$ is then:
\begin{equation}\label{eq:matrixd}
\begin{blockarray}{cccccc}
& \cdots & \Ccal_{i-2} & \Ccal_i & \Ccal_{i+2} & \cdots\\
\begin{block}{c(ccccc)}
\vdots & \cdots & \cdots & \cdots & \cdots & \cdots\\
\Ccal_{i-1} & \cdots & d_1^\varphi & d_M^\varphi & 0 & \cdots\\
\Ccal_{i+1} & \cdots & d_2^\varphi & d_1^\varphi & d_M^\varphi & \cdots\\
\Ccal_{i+3} & \cdots & d_3^\varphi & d_2^\varphi & d_1^\varphi & \cdots\\
\end{block}
\end{blockarray}
\end{equation}\\
In case $N_L \geq 4$, define $\bar{\Ccal}_{{2k}} = \Ccal_{kN_L} \oplus \Ccal_{kN_L+2}\oplus \cdots \oplus \Ccal_{(k+1)N_L-2}$ and  $\bar{\Ccal}_{2k+1} = \Ccal_{kN_L+1} \oplus \Ccal_{kN_L+3}\oplus \cdots \oplus \Ccal_{(k+1)N_L-1}$, for $k=0, 1, \dots, n/N_L$.  Then matrix (\ref{eq:matrixd}) will look exactly the same as above, with $\Ccal_i$ replaced by $\bar{\Ccal}_{i}$ and the rest of the proof will also be the same.  To lighten the notation, we assume that $N_L=2$ in the remaining steps below.

\noindent\textit{Step \arabic{Stepcounter}: Change the basis of $\Ccal_i$ to a more manageable one.\stepcounter{Stepcounter}}\\ 
Applying the constructions from \S \ref{sec:milnorsdef} to the Morse differential $d_M^\varphi$, we get two exact sequences of $\F$-vector spaces:
\begin{align}
&\xymatrix{0 \ar[r] & Z_i^M \ar[r] & \Ccal_i \ar[r]^d & B^M_{i-1} \ar[r] & 0}\\
&\xymatrix{0 \ar[r] & B^M_i \ar[r] & Z_i^M \ar[r] & H_i \ar[r] & 0} \nonumber
\end{align}
The superscript $M$ is used to emphasize that these are the Morse differential boundaries, and not the pearl boundaries.  Pick any basis $b_i^M$ of $B_i^M$ as well as the bases $h_i = h^f_i \otimes_\F 1$ of $H_i(L;\F)$.  We obtain a new basis of $\Ccal_i$ given by $s(h_i) b_i^M s(b_{i-1}^M)$, written sometimes $h_i b_i^M b_{i-1}^M$, where $s$ denotes sections of the sequences above, which by a slight abuse of notation are given the same name.  Set $b_{[i]}^M = \oplus_{k \equiv i \mod 2} b_{k}^M$ and $h_{[i]} = \oplus_{k \equiv i \mod 2} h_{k}, \; i=0,1$.  By formula (\ref{eq:tauchangebase}), we have 
$$\tau(\Ccal_{[*]}, b_{[*]}^M b_{[*-1]}^M h_{[*]}) = \tau((L, \varphi), \mathcal{D}) \prod_{i=0}^n [c_i/h_i b_i^M b_{i-1}^M]^{(-1)^i}$$
By formula (\ref{eq:torsioneqtorsion}), this yields
$$\tau(\Ccal_{[*]}, b_{[*]}^M b_{[*-1]}^M h_{[*]}) = \dfrac{|\tor_{\text{odd}}(L)|}{|\tor_{\text{ev}}(L)|}\tau((L, \varphi), \mathcal{D})$$
Our task is now to compute the torsion on the left-hand-side of this equality.

Note that $E^{0, \varphi}$ is the Morse complex, therefore $$\tau(E^{0, \varphi}, \text{Crit}_* f, h_*\otimes 1) = \dfrac{|\tor_{\text{ev}}(L)|}{|\tor_{\text{odd}}(L)|}$$

\noindent\textit{Step \arabic{Stepcounter}\stepcounter{Stepcounter}: Write $d_M^\varphi$ in the new basis.}\\
With respect to the bases $h_i b_i^M b_{i-1}^M$, the Morse differential $d_M^\varphi\colon \Ccal_i \to \Ccal_{i-1}$ is given by:
\begin{equation}\label{eq:matrixdM}
\begin{blockarray}{cccc}
& s(H_i) & B_i^M & s(B^M_{i-1})\\
\begin{block}{c(ccc)}
s(H_{i-1}) & 0 & 0 & 0\\
B_{i-1}^M    &0 & 0 & I\\
s(B^M_{i-2}) & 0 & 0 & 0\\
\end{block}
\end{blockarray}
\end{equation}
where $I$ is the identity matrix.\\

\noindent\textit{Step \arabic{Stepcounter}\stepcounter{Stepcounter}:  Write the matrix for $d_{1}^\varphi\colon \Ccal_i \to \Ccal_{i+1}$ in the new basis.}\\
As noted in \S \ref{sec:d1complex}, $d_1^\varphi$ induces a map $d_{1*}^\varphi$ on Morse homology.  The map $d_1^\varphi \colon \Ccal_i \to \Ccal_{i+1}$ is then, in matrix form:
\begin{equation}\label{eq:matrixd1}
\begin{blockarray}{cccc}
& s(H_i) & B_i^M & s(B^M_{i-1})\\
\begin{block}{c(ccc)}
s(H_{i+1}) & d_{1 *}^\varphi & 0 & ?\\
B_{i+1}^M    & ? & ? & ?\\
s(B^M_{i}) & 0 & 0 & ?\\
\end{block}
\end{blockarray}
\end{equation}
where $?$ denote some matrices which do not matter for our purpose.\\

\noindent\textit{Step \arabic{Stepcounter}\stepcounter{Stepcounter}:  Find a basis for the pearl boundaries $B_{[i]}=\text{Im}(d\colon \Ccal_{[i+1]}\to \Ccal_{[i]})$.}\\
Using matrices (\ref{eq:matrixdM}) and (\ref{eq:matrixd1}), we see that a part of matrix (\ref{eq:matrixd}) looks as follows:
\begin{equation}\label{eq:bigmatrixd}
\begin{blockarray}{c ccc  ccc  ccc}
 &  & \Ccal_{i-2} &  & & \Ccal_i & & & \Ccal_{i+2}\\
\begin{block}{c (ccc | ccc | ccc)}
            & d_{1*}^\varphi & 0 & ? & 0 & 0 & 0 & & &\\
\Ccal_{i-1}     & ? & ? & ? & 0 & 0 & I & & 0 & \\
            & 0 & 0 & ? & 0 & 0 & 0 & \\
\cline{2-10}
            &  &  &  & d_{1*}^\varphi & 0 & ? & 0 & 0 & 0\\
\Ccal_{i+1}     &  & d_2^\varphi & & ? & ? & ? & 0 & 0 & I\\
            &  &  &  & 0 & 0 & ? & 0 & 0 & 0\\
\end{block}
\end{blockarray}
\end{equation}
Pick a set of linearly independent vectors $j_r \subset s(H_{r}(L;\F))$ that are mapped by $d_{1*}^\varphi$ to a basis of $d_{1*}^\varphi(s(H_r)) \subset s(H_{r+1})$.  Abusing notation a bit, we say that $j_r$ is a basis for the image of $d_{1*}^{ \varphi}$.  We see, by the shape of matrix (\ref{eq:bigmatrixd}), that
$d\colon \Ccal_{[i]}\to \Ccal_{[i-1]}$ is injective on the vector space spanned by the linearly independent vectors in the set $V_{[i]}=\{ s(b_{r-1}^M), j_r \; | \; r \equiv [i] \mod 2 \}$.  In other words, $d(V_{[i]})$ is a set of linearly independent vectors in $\Ccal_{[i-1]}$ and $\dim B_{[i-1]} \geq \dim V_{[i]}$.  Since $L$ is $E^{1, \varphi}$-narrow, we have 
\begin{equation}\label{eq:imd1}
\ker d_{1*}^{r, \varphi} = \text{Im}(d_{1*}^{\varphi}\colon H_{r-1}(L;\mathbb{F}) \to H_{r}(L;\mathbb{F}))= \text{Im } d_{1*}^{r-1, \varphi}
\end{equation}
\begin{lem}\label{lem:basisd}
The vectors in $d(V_{[i]})$ span all of $B_{[i-1]}$, hence provide a basis for the image of $d$.
\end{lem}
\begin{proof}
Since $L$ is $\varphi$-narrow, we have $\dim \Ccal_{[i]} = \dim B_{[i]} + \dim B_{[i-1]}$.  Moreover, $L$ is $E^{1, \varphi}$-narrow, hence $\dim H_r(L) - \dim \ker d_{1*}^{r, \varphi} = \dim \text{Im } d_{1*}^{r, \varphi} = \dim \ker d_{1*}^{r+1, \varphi}$.  A dimension count then yields
\begin{align*}
\dim B_{[i]} + \dim B_{[i-1]} & \geq \dim V_{[i-1]} + \dim V_{[i]}\\
& = \sum_{r \equiv [i-1]} (\dim s(B_{r-1}^M) + \dim H_r - \dim \ker d_{1*}^{r, \varphi}) \\
& + \sum_{s \equiv [i]} (\dim s(B_{s-1}^M) + \dim H_s - \dim \ker d_{1*}^{s, \varphi})\\
&= \sum_{r \equiv [i-1]} (\dim s(B_{r-1}^M) + \dim \ker d_{1*}^{r+1, \varphi})\\
&+ \sum_{s \equiv [i]} (\dim s(B_{s-1}^M) + \dim H_s - \dim \ker d_{1*}^{s, \varphi})\\
&= \sum_{s \equiv [i]} \dim H_s + \dim s(B_{s-1}^M) + \dim s(B_{s}^M) \\
&= \sum_{s \equiv [i]} \dim \Ccal_{s} = \dim \Ccal_{[i]}.
\end{align*}
Therefore, the first inequality is an equality and, since $\dim B_{[i-1]} \geq \dim V_{[i]}$ for each $[i]$, we get the desired conclusion.
\end{proof}

\noindent\textit{Step \arabic{Stepcounter}: The torsion of $(E^{1, \varphi}, h_*\otimes 1)$.\stepcounter{Stepcounter}}\\ 
By assumption on $L$, we have the following exact sequences:
\begin{equation}\label{eq:d1starsplit}
\xymatrix{
0 \ar[r] & \text{Im } d^{r-1, \varphi}_{1*} \ar[r] & H_r(L;\mathbb{F}) \ar[r] & \text{Im } d^{r, \varphi}_{1*} \ar[r] & 0
}
\end{equation}
which split by a splitting $s\colon \text{Im } d^{r, \varphi}_{1*} \to H_r$ that we fix.  Denote $q_i = d_{1*}^{i-1, \varphi}(j_{i-1})$ the basis (i.e.\ a list of vectors) of $\text{Im } d_{1*}^{i-1, \varphi}$.  By Lemma \ref{lem:basisd}, a basis $b_{[i-1]}$ of $B_{[i-1]} = \text{Im } (d\colon \Ccal_{[i]} \to \Ccal_{[i-1]})$ is given by concatenating the vectors in the list $q_{[i-1]}$ and the vectors $d(s(b^M_{[i]}))$.  Therefore, using the splitting $s$ of the sequence (\ref{eq:d1starsplit}), the map $s_d\colon B_{[i-1]} \to \Ccal_{[i]}$, defined via
\begin{align*}
q_{[i-1]} & \to s(q_{[i-1]})\\
d(s(b^M_{[i]})) & \to s(b^M_{[i]})
\end{align*}
is a section of $d$.  Reordering the vectors in the basis $b_{[i]}$ in the order indicated by the labels above matrix (\ref{eq:bigmatrixdet}) (which does not change torsion, since it is an element of $\mathbb{F}^\times / \pm 1$), we see that the matrix $b_{[0]} b_{[1]} / h_{[0]}b^M_{[0]}b^M_{[1]}$ is upper triangular and the relevant diagonal parts of it look as follows:
\begin{equation}\label{eq:bigmatrixdet}
\begin{blockarray}{c c cccc c}
& \cdots & d_{1*}^\varphi(j_{2i-1}) & s_d(q_{2i+1}) & d(s(b_{2i}^M)) & s_d (d(s(b^M_{2i-1}))) & \cdots\\
\begin{block}{c(c | cccc | c)}
\underset{l \leq 2(i-1), \; [l] \equiv 0 \mod 2}{\bigoplus} \Ccal_{l} & \cdots & 0 & 0 & 0 & 0 & 0\\
\cline{2-7}
H_{2i}(L;\mathbb{F}) & \cdots & q_{2i} & s(q_{2i+1}) & 0 & 0 & 0\\
B_{2i}^M & \cdots & ? & 0 & I & 0 & 0\\
s(B_{2i-1}^M) & \cdots & 0 & 0 & 0 & I & 0\\
\cline{2-7}
\underset{l \geq 2(i+1), \; [l] \equiv 0 \mod 2}{\bigoplus} \Ccal_{l} & \cdots & ? & ? & ?& ? & \cdots\\
\end{block}
\end{blockarray}
\end{equation}
There is also a similar matrix for $b_{[0]} b_{[1]} / h_{[1]}b^M_{[1]}b^M_{[0]}$, with $q_{2i}$ (resp. $s(q_{2i+1})$) replaced with $q_{2i-1}$ (resp. $s(q_{2i})$).  Therefore, 
\begin{align*}
\tau((L, \varphi), \mathcal{D}) &= \dfrac{|\tor_{\text{ev}}(L)|[b_{[0]} b_{[1]} / h_{[0]}b^M_{[0]}b^M_{[1]}]}{|\tor_{\text{odd}}(L)|[b_{[0]} b_{[1]} / h_{[1]}b^M_{[1]}b^M_{[0]}]}= \dfrac{|\tor_{\text{ev}}(L)|}{|\tor_{\text{odd}}(L)|}\prod_{k=0}^n [q_k s(q_{k+1})/ h_k]^{(-1)^k}\\
&= \dfrac{|\tor_{\text{ev}}(L)|}{|\tor_{\text{odd}}(L)|}\tau(E^{1, \varphi}, h_*\otimes 1) = \tau(E^{0, \varphi}, \text{Crit}_* f, h_*\otimes 1)\tau(E^{1, \varphi}, h_*\otimes 1)
\end{align*}
and this proves the first part of the theorem.

\noindent\textit{Step \arabic{Stepcounter}: Open GW-invariants.\stepcounter{Stepcounter}}\\
The last step of the proof is now simply a matter of unwrapping definitions.

Fix a generic pearl triple $(f, \rho, J)$ and a basis $h_* \otimes 1$ associated to $f$. Then, by Proposition \ref{prop:d1openGW}, $d_{1*}^\varphi$ is represented by a matrix whose entries are
$$\sum_A GW_{0,2}^A(h_k \otimes 1, h_{k-1+N_L}\otimes 1)\varphi(A)$$

Finally, a basis for the image of $d_{1*}^\varphi$ is obtained by applying Gau\ss\ algorithm to the matrix $d_{1*}^\varphi$.  In this algorithm, one performs only rational operations (over $\mathbb{F}$) on the entries of the matrix.  The same holds when taking a section of the sequence (\ref{eq:d1starsplit}).  Torsion is then obtained by taking products and divisions of determinants, which are themselves polynomial functions in the entries of the matrices involved.
\end{proof}

\begin{rem}
It seems that the theorem should generalize to yield the following formula:
$$\tau((L, \varphi), \mathcal{D}) = \prod_{k=0}^n \tau(E^{k, \varphi})$$
which would prove invariance for all monotone Lagrangians.  However it is not clear how to choose bases for the pages $E^{k, \varphi}, \; k \geq 2$.  In the theorem above, there was a canonical choice of bases given by $\Crit_* \; f$ and $h_* \otimes 1$ on $E^0$ and $E^1$.  See also \cite[Remark 4.2.3]{Cha:3fold} for a similar discussion.
\end{rem}
\begin{proof}[Proof of Corollary \ref{cor:thmfirst}]
This is similar to Biran-Cornea \cite[Proposition 4.3.1]{Bi-Co:rigidity}.  The only difference concerns orientations of the space of pearls, which ultimately relies on orientations of the space of pseudo-holomorphic discs. But our choice of spin structures on $L$ and $g(L)$ guarantees that orientations are also preserved; this follows e.g.\ from Cho \cite[Theorem 6.4]{Cho:Clifford} or Fukaya-Oh-Ohta-Ono \cite[\S 8.1.4]{FO3:book-vol2}. Here are the details.

Fix $\mathcal{D} = (f, \rho, J)$ a generic pearl data triple on $L$, as well as an orientation and a spin structure on $L$, and $g \in \text{Symp}(M, \omega)$.  Then $g_*(\mathcal{D}) = (f \circ g^{-1}, (g^{-1})^*(\rho), g_* J g_*^{-1})$ is a generic pearl data triple on $g(L)$.  Moreover, with the orientation and spin structure on $g(L)$ induced from the ones on $L$ by $g$, using the results of Cho and Fukaya et al.\ , we obtain a chain-level identification $g\colon \Ccal(L, \mathcal{D}) \to \Ccal(g(L), g_*(\mathcal{D}))$.  Therefore, torsion on both sides coincide, as well as the degree spectral sequences, open Gromov-Witten invariants, etc.\
\end{proof}

We end this section with a useful computational lemma, making more precise the coefficients of the rational function mentioned above.
\begin{lem}\label{lem:splittingofd1}
Let $(L, \varphi)$ be an $E^{1, \varphi}$-narrow pair as above and $[L]:=([L]_\Z \otimes 1) \in H_{n}(L;\mathbb{F}) \cong \mathbb{F}$ denote the fundamental class.  Fix $\sigma \in H_{n+1-N_L}(L;\mathbb{F})$ a preimage of $[L]$ with respect to $d_{1*}^\varphi$.  Then a splitting of the exact sequence (\ref{eq:d1starsplit}) is given by left-multiplication with $\sigma$:
\begin{align*}
s\colon (\text{Im } d_{1*}^\varphi \subset H_k(L;\mathbb{F})) & \to H_{k +1-N_L}(L; \mathbb{F})\\
x & \mapsto \sigma \cdot x
\end{align*}
where $\cdot$ denotes the Morse intersection product.
\begin{proof}
This is a simple application of the Leibniz rule in Theorem \ref{thm:specseq}.  Indeed, we have $x=d_{1*}^\varphi(y)$ for some $y$, by assumption.  Moreover, $d_{1*}^\varphi(\sigma \cdot x) = d_{1*}^\varphi(\sigma) \cdot x + (-1)^{N_L-1} \sigma \cdot d_{1*}^\varphi(x) = d_{1*}^\varphi(\sigma) \cdot x = [L] \cdot x = x$
\end{proof}
\end{lem}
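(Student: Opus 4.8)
The plan is to check directly that the proposed map $s$ is a section of the surjection
$d_{1*}\colon H_{k+1-N_L}(L;\mathbb{F}) \to \mathrm{Im}\, d_{1*} \subset H_k(L;\mathbb{F})$ occurring in the short exact sequence (\ref{eq:d1starsplit}), i.e.\ that $d_{1*}\bigl(s(x)\bigr) = x$ for every $x \in \mathrm{Im}\, d_{1*}$. The only tool needed is the Leibniz rule from Theorem \ref{thm:specseq}, which is available because we work over the field $\mathbb{F}$, a commutative ring. Applying it to $s(x) = \sigma \cdot x$ yields $d_{1*}(\sigma \cdot x) = d_{1*}(\sigma)\cdot x + (-1)^{n-|\sigma|}\,\sigma\cdot d_{1*}(x)$, with $|\sigma| = n+1-N_L$, so that the sign is $(-1)^{N_L-1}$.

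I would then dispose of the two terms on the right separately. For the first term, $d_{1*}(\sigma) = [L]$ by the very choice of $\sigma$ as a $d_{1*}$-preimage of the fundamental class, and since $[L]$ is the unit for the Morse intersection product this gives $d_{1*}(\sigma)\cdot x = [L]\cdot x = x$. For the second term, the $E^1$-narrow hypothesis says precisely that the complex (\ref{eq:d1complex}) is acyclic, i.e.\ $\mathrm{Im}\, d_{1*} = \ker d_{1*}$ as recorded in (\ref{eq:imd1}); since $x$ lies in $\mathrm{Im}\, d_{1*}$ it is therefore annihilated by $d_{1*}$, so $\sigma\cdot d_{1*}(x) = 0$. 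Combining the two, $d_{1*}(s(x)) = x$, which is exactly the assertion that $s$ splits (\ref{eq:d1starsplit}).

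I do not expect a genuine obstacle here; the single point requiring care is the grading bookkeeping, namely that when $x$ is viewed as an element of the copy of $\mathrm{Im}\, d_{1*}$ sitting inside $H_k(L;\mathbb{F})$, the product $\sigma \cdot x$ does land in $H_{k+1-N_L}(L;\mathbb{F})$ — the correct source of the $d_{1*}$ whose image contains $x$ — and that the degree of $\sigma$ feeding into the Leibniz sign is consistently $n+1-N_L$. Once the degrees are matched up, the three-line computation above completes the proof, and it is this splitting that one then feeds into the section $s_d$ of Step 6 and into the explicit Gromov-Witten description in Step 7 of Theorem \ref{thm:main}.
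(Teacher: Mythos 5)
Your proof is correct and follows the same route as the paper: apply the Leibniz rule from Theorem \ref{thm:specseq}, note that $d_{1*}(\sigma)=[L]$ acts as the unit, and kill the second term because $d_{1*}(x)=0$. The only cosmetic difference is that you justify $d_{1*}(x)=0$ via $\mathrm{Im}\,d_{1*}=\ker d_{1*}$ (the $E^1$-narrow hypothesis), whereas the paper writes $x=d_{1*}(y)$ and implicitly uses $d_{1*}^2=0$; both are immediate here.
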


\subsection{A special class of $E^{1, \varphi}$-narrow pairs and their torsion}\label{sec:assumptionstar}
In this section, we assume that the monotone Lagrangian $L$ is a product $S^{2k+1} \times V \subset (M, \omega)$.  Moreover, in order to apply Theorem \ref{thm:main}, we consider only connected closed orientable $V$ and we assume that $L$ admits a spin structure, which is equivalent in this case to $V$ admitting a spin structure; we also allow $V$ to be a point.  Let $\sigma:= *\otimes [V] \in H_{n-(2k+1)}(L;\Z)$ be the fundamental class of $V$ in $L$.  We impose the following rather strong restriction on open Gromov-Witten invariants of $L$:

\noindent \textbf{Assumption $(\star)$}: Let $\varphi\colon H_2(M, L) \to \mathbb{F}^\times$ be a representation such that $d_{1*}^\varphi(\sigma) = r_\varphi L, \; 0 \neq r_\varphi \in \mathbb{F}.$\\
Obviously, $L$ is then $E^{1, \varphi}$-narrow and $N_L=2k+2$.  Notice also that
$$r_\varphi = \sum_{\mu(A) = 2k+2} GW_{0,2}^A(\sigma, pt)\varphi(A)$$
by Proposition \ref{prop:d1openGW}.
\begin{thm}\label{thm:assumptionstar}
Let $L=S^{2k+1}\times V$ be a closed, orientable, spin and monotone Lagrangian submanifold.  Let $\varphi\colon H_2(M, L)\to \F^\times$ be a representation satisfying Assumption $(\star)$.  Suppose that $\text{char } \F \nmid |\tor(L)|$.  Then $N_L=2k+2$, $L$ is $E^{1, \varphi}$-narrow and $\tau(L, \varphi) = \dfrac{|\tor_{\text{ev}}(L)|}{|\tor_{\text{odd}}(L)|}r_\varphi^{-\chi(V)} \in \K(\F)$.
\end{thm}
In \S \ref{sec:ex}, we will provide non trivial applications of the above theorem and prove the following
\begin{cor}\label{cor:assumptionstar}
Assumption $(\star)$ is satisfied for every narrow representation $\varphi$ whenever $k=0$ and
\begin{itemize}
\item $V$ is a point.
\item $V$ is a $m$-torus, i.e.\ $L$ is a $m+1$-torus. Then $\tau(L, \varphi) = 1$
\item $V$ is an orientable surface of genus $g \geq 1$.  Then $\tau(L, \varphi) = r_\varphi^{2(g-1)}$
\item $L$ is given by a product Lagrangian embedding $S^1 \times V \subset (S^2 \times X, \omega_{S^2}\oplus \omega_X)$, where $H_2(X, V) = 0$.
\end{itemize}
\end{cor}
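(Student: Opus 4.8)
The plan is to go through the four cases of Corollary~\ref{cor:assumptionstar} one at a time and, in each, verify Assumption~$(\star)$; Theorem~\ref{thm:assumptionstar} then gives $\tau_\varphi(L)=r_\varphi^{-\chi(V)}$, i.e.\ the stated values since $\chi(T^m)=0$ and $\chi(\Sigma_g)=2-2g$. Throughout $k=0$, so $L=S^1\times V$ with $n=\dim L=1+\dim V$, and $\sigma=*\times[V]=p_{S^1}\otimes[V]$ is the ring element $P:=p_{S^1}\otimes 1\in H_{n-1}(L)$, where $p_{S^1}\in H_0(S^1)$ and $1=[V]$ is the unit of $H_*(V)$. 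Since $\deg\sigma=n-1$ and $d_{1*}^\varphi$ raises degree by $N_L-1$, as soon as $N_L=2$ one automatically has $d_{1*}^\varphi(\sigma)\in H_n(L;\mathbb F)=\mathbb F[L]$, so the real content of $(\star)$ in each case is (i) $N_L=2$ and (ii) non-vanishing of the scalar $r_\varphi$ defined by $d_{1*}^\varphi(\sigma)=r_\varphi[L]$.

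When $V$ is a point, a torus, or $\Sigma_g$, the Morse homology ring of $L$ is generated by $H_{n-1}(L)$ (clear for $S^1$; for $T^{m+1}$ and $S^1\times\Sigma_g$ this is the Remark following Theorem~\ref{thm:buhovsky} and follows from Künneth), so Theorem~\ref{thm:buhovsky} gives, for every narrow $\varphi$, that $N_L=2$ and that $(H_*(L;\mathbb F),d_{1*}^\varphi)$ is acyclic. I would then combine this with the Leibniz rule for $d:=d_{1*}^\varphi$ (Theorem~\ref{thm:specseq}) on $H_*(L;\mathbb F)=H_*(S^1;\mathbb F)\otimes H_*(V;\mathbb F)$. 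For $V=\mathrm{pt}$, $(H_*(S^1),d)$ is the two-term complex $0\to H_0\to H_1\to 0$, so acyclicity forces $d\colon H_0\to H_1$ to be an isomorphism, and $\sigma$ generates $H_0$: hence $r_\varphi\neq 0$. For $V=\Sigma_g$ with $g\geq 2$, the relations $\alpha_i\beta_i=\alpha_j\beta_j$ among the degree-$(n-1)$ generators of $H_*(\Sigma_g)$, together with linear independence of the $\alpha_i,\beta_i$ in $H_{n-1}(L)$, force $d(\alpha_i)=d(\beta_i)=0$; then $d$ is determined by $d(\sigma)=r_\varphi[L]$, the complex splits as $(H_*(S^1),d)\otimes H_*(\Sigma_g)$, and acyclicity forces $r_\varphi\neq 0$ (the case $g=1$ is the $3$-torus, covered next). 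For $V=T^m$, $H_*(S^1\times T^m;\mathbb F)$ is the free graded-commutative algebra on the $m+1$ degree-$(n-1)$ classes $p_{S^1},e_1,\dots,e_m$, so $d$ is contraction $\iota_\mu$ with $\mu=(r_\varphi,u_1,\dots,u_m)$ and $(H_*(L),d)$ is the corresponding Koszul complex; acyclicity only forces $\mu\neq 0$. If $r_\varphi\neq 0$ we are done; otherwise I would re-choose the circle factor of $T^{m+1}$ to be one along which $\mu$ is nonzero, and since $\chi(T^m)=0$ Theorem~\ref{thm:assumptionstar} gives $\tau_\varphi(T^{m+1})=1$ regardless of the choice. (Equivalently one avoids $(\star)$ altogether: via Theorem~\ref{thm:main}, $\tau_\varphi(T^{m+1})$ is the torsion of $(\Lambda^*\mathbb F^{m+1},\iota_\mu)$, which for $\mu\neq 0$ equals $1$ when $m\geq 1$ after a change of basis turning it into a sum of trivial two-term complexes, the change-of-basis contribution collapsing because $\sum_i(-1)^i\binom m i=0$ for $m\geq1$.)

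For $L=S^1\times V\subset(S^2\times X,\omega_{S^2}\oplus\omega_X)$ with $\pi_2(X,V)=0$, one has $\pi_2(S^2\times X,S^1\times V)=\pi_2(S^2,S^1)$, so $N_L=N_{S^1\subset S^2}=2$ and the only Maslov-$2$ classes carrying nonconstant discs are the two hemisphere classes $D_+$ and $D_-=[S^2]-D_+$ (with trivial $X$-component). For generic $J$ a holomorphic disc in such a class is a hemisphere in $S^2$ times a constant disc at a point of $V$ (the $X$-part lies in $0\in\pi_2(X,V)$, hence is constant by monotonicity), so $GW^{D_\pm}_{0,2}(\sigma,pt)=\pm1$ by the classical count of hemispheres through two points of the equator, giving $r_\varphi=\pm\varphi(D_+)\mp\varphi(D_-)$. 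A sign check---or the cheaper observation that the analogous count for the equator $S^1\subset S^2$ itself must reproduce $QH^\psi(S^1)=H(H_*(S^1),d_{1*}^\psi)$, which vanishes exactly for the narrow $\psi$---shows the two contributions have opposite signs, so $r_\varphi$ equals, up to a unit, $z\,\partial_zW$ for the superpotential $W(z)=z+\varphi([S^2])\,z^{-1}$; narrowness of $\varphi$ means $\varphi$ is not a critical point of $W$, hence $r_\varphi\neq 0$.

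The common obstacle is (ii): passing from acyclicity of $(H_*(L;\mathbb F),d_{1*}^\varphi)$ to the assertion that $d_{1*}^\varphi$ does not annihilate the \emph{specific} class $\sigma=*\times[V]$. For $\Sigma_g$ this is forced by the ring relations; for the torus it can genuinely fail for the naively fixed $\sigma$, and one must either adapt the splitting of $L$ to $\varphi$ or compute the Koszul torsion directly---harmless because $\chi(T^m)=0$; for the $S^2\times X$ family the difficulty migrates into an open Gromov--Witten count, which $\pi_2(X,V)=0$ reduces to the equator in $S^2$, the one delicate point there being the sign bookkeeping that makes $r_\varphi$ a value of $z\,\partial_zW$ rather than of $W$.
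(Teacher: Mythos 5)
Your argument is correct and runs closely parallel to the paper's proof, so I will mainly record where the two diverge and where you make points the paper leaves implicit. For $V=\mathrm{pt}$ you appeal to Theorem~\ref{thm:buhovsky} (the ring of $H_*(S^1)$ is indeed generated in degree $n-1=0$), whereas the paper instead carries out the brute-force pearl computation of \S\ref{sec:circle}; that explicit computation is then re-used in \S\ref{sec:splitlag}, so the paper's route is not wasteful. For $S^1\times\Sigma_g$, $g\ge 2$, the paper kills the coefficients $a_i,b_j$ via the relations $\alpha_i\beta_j=0$ $(i\ne j)$, while you do it via $\alpha_i\beta_i=\alpha_j\beta_j$ together with linear independence; both are forced by the Leibniz rule and reach the same determination of $d_{1*}^\varphi$. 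For the split embedding $S^1\times V\subset S^2\times X$, the paper goes through a split $J$ and directly imports the circle count $d_{1*}(\ast\times[V])=(A-B)L$ from \S\ref{sec:circle}; your superpotential framing of $r_\varphi$ as $z\,\partial_z W$ is a repackaging of the same count, but the line ``narrowness of $\varphi$ means $\varphi$ is not a critical point of $W$'' is, as stated, close to circular and in fact \emph{is} the equivalence $r_\varphi\ne 0\Leftrightarrow\varphi$-narrowness one is trying to establish; the cleaner route (and the one the paper uses) is the K\"unneth identification of the twisted pearl complex of $L$ with that of $S^1\subset S^2$ tensored with the Morse complex of $V$, which immediately gives $QH^\varphi(L)=QH^\varphi(S^1)\otimes H(V;\mathbb F)$. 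Finally, for the torus you correctly flag the genuine subtlety that the paper elides in its phrasing: acyclicity of the Koszul complex $(\Lambda^*\mathbb F^{m+1},\iota_\mu)$ forces only $\mu\ne 0$, not $r_\varphi\ne 0$ for a fixed splitting, so Assumption~$(\star)$ can fail for the naively chosen $\sigma$ and one must re-choose the $S^1$-factor in a $\varphi$-dependent way (which is what the paper's ``assume without loss of generality that $r_1\ne 0$'' is doing), or bypass $(\star)$ altogether as you propose via a direct change of basis in the Koszul complex using $\sum_i(-1)^i\binom m i=0$; either way $\chi(T^m)=0$ makes the answer $1$ independent of these choices. That observation, plus the explicit Koszul computation, is a useful addition not made explicit in the paper.
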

\begin{remsnonum}
a.  The case of $S^1\times S^2$ does not fit the above pattern and will be discussed in \S \ref{sec:s1s2}.\\
b.  The second and third cases follow from Buhovski's Theorem \ref{thm:buhovsky}.  We will nevertheless (re)prove them in \S \ref{sec:proofcorassumptionstar}.
\end{remsnonum}
\begin{proof}[Proof of Theorem \ref{thm:assumptionstar}]
Let $L=S^{2k+1} \times V$ and assume there exists $\varphi$ satisfying Assumption $(\star)$.  Note first that, by the K\"unneth formula, we have $H_r(L;\F) = (H_0(S^{2k+1})\otimes H_r(V)) \bigoplus (H_{2k+1}(S^{2k+1})\otimes H_{r-(2k+1)}(V))$

Let $\beta_{r, i}, \; i=1, \dots, \dim H_{r}(V)$ denote the basis of $H_r(V;\F)$ obtained by tensoring a basis of the free part of $H_r(V;\Z)$ with $1 \in \F$.  Recall that all bases obtained in this way belong to the same equivalence class.  Similarly, let $*$ and $[S^{2k+1}]$ denote the corresponding bases of $H(S^{2k+1};\F)$.  Then, a basis of $\text{Im } (d_{1*}^\varphi \colon H_{r-(2k+1)}(L) \to H_{r}(L)), \; r \leq n,$ is given by the set of vectors
$$q_{r, i}:= \frac{1}{r_\varphi}d_{1*}^\varphi(* \otimes \beta_{r-(2k+1), i})$$
Indeed, by Assumption $(\star)$, we have
\begin{align*}
q_{r, i} & = \frac{1}{r_\varphi}d_{1*}^\varphi(\sigma \cdot ([S^{2k+1}]\otimes \beta_{r-(2k+1), i}))\\
& = \frac{1}{r_\varphi}d_{1*}^\varphi(\sigma) \cdot ([S^{2k+1}]\otimes \beta_{r-(2k+1), i}) - \frac{1}{r_\varphi}\sigma \cdot d_{1*}^\varphi([S^{2k+1}]\otimes \beta_{r-(2k+1), i})\\
& = [S^{2k+1}]\otimes \beta_{r-(2k+1), i} - \frac{1}{r_\varphi}\sigma \cdot d_{1*}^\varphi([S^{2k+1}]\otimes \beta_{r-(2k+1), i})
\end{align*}
Also, notice that by definition of $\sigma = *\otimes [V]$, we have, for any $x\in H_*(L)$, $\sigma \cdot x \in H_{0}(S^{2k+1})\otimes H_{*}(V) \subset H_{*}(L)$, which proves linear independence of the vectors $q_{r, i}$.  To prove that the vectors span the image, note that the above computation yields
$$\frac{1}{r_\varphi} \sigma \cdot q_{r+2k+1, j} = \frac{1}{r_\varphi}* \otimes \beta_{r, j}$$
since $\sigma^2 = 0$.  Therefore, the set of vectors $q_{r, i}$ and $\frac{1}{r_\varphi}\sigma \cdot q_{r+2k+1, j}$ provide a basis of $H_{r}(L)$, where $i=1, \dots, \dim H_{r}(V)$, $j=1, \dots, \dim H_{r+2k+1}(V)$.  Hence the vectors $q_{r, i}$ must span the image.

Finally, we have the change of basis matrix $q_r s(q_{r+2k+1}) / (*\otimes \beta_r + [S^{2k+1}]\otimes \beta_{r-(2k+1)})$ given by $$\bordermatrix{ & \{q_{r, i} \} & \{ s(q_{r+2k+1, j}) \} \cr
                H_0(S^{2k+1})\otimes H_{r}(V) & ? &  \frac{1}{r_\varphi} I \cr
                H_{2k+1}(S^{2k+1})\otimes H_{r-(2k+1)}(V) & I &  0 \cr
}$$
with determinant $\dfrac{1}{r_\varphi^{\dim H_r(V;\F)}}$.  By Theorem \ref{thm:main}, we conclude that
$$\tau(L, \varphi) =\dfrac{|\tor_{\text{ev}}(L)|}{|\tor_{\text{odd}}(L)|} r_\varphi^{-\chi(V;\F)}$$
\end{proof}

\section{Examples and proof of Corollary \ref{cor:assumptionstar}}\label{sec:ex}
\subsection{The circle}\label{sec:circle}
Although it is possible to compute this example using only a perfect Morse function (by Theorem \ref{thm:main}), we will do everything by brute force since it illustrates how computations are done in general.

Consider first a monotone circle $S^1 \subset S^2$, i.e.\ a circle dividing $S^2$ in two parts (or hemispheres) of equal area.  Then $H_2(S^2, S^1) \cong \Z^2$, with generators given by the classes of the hemispheres, denoted by $A$ and $B$.  Fix a Morse-Smale function $f_n\colon S^1 \to \R$ with $\text{Crit}_0 f_n = \{ y_i \; | \; i=1, \dots, n\}$ and $\text{Crit}_1 f_n = \{ x_i \; | \; i=1, \dots, n\}$, as on Figure \ref{fig:morses1}.  
\begin{figure}[h!]
  \centering
    \includegraphics[width=5cm]{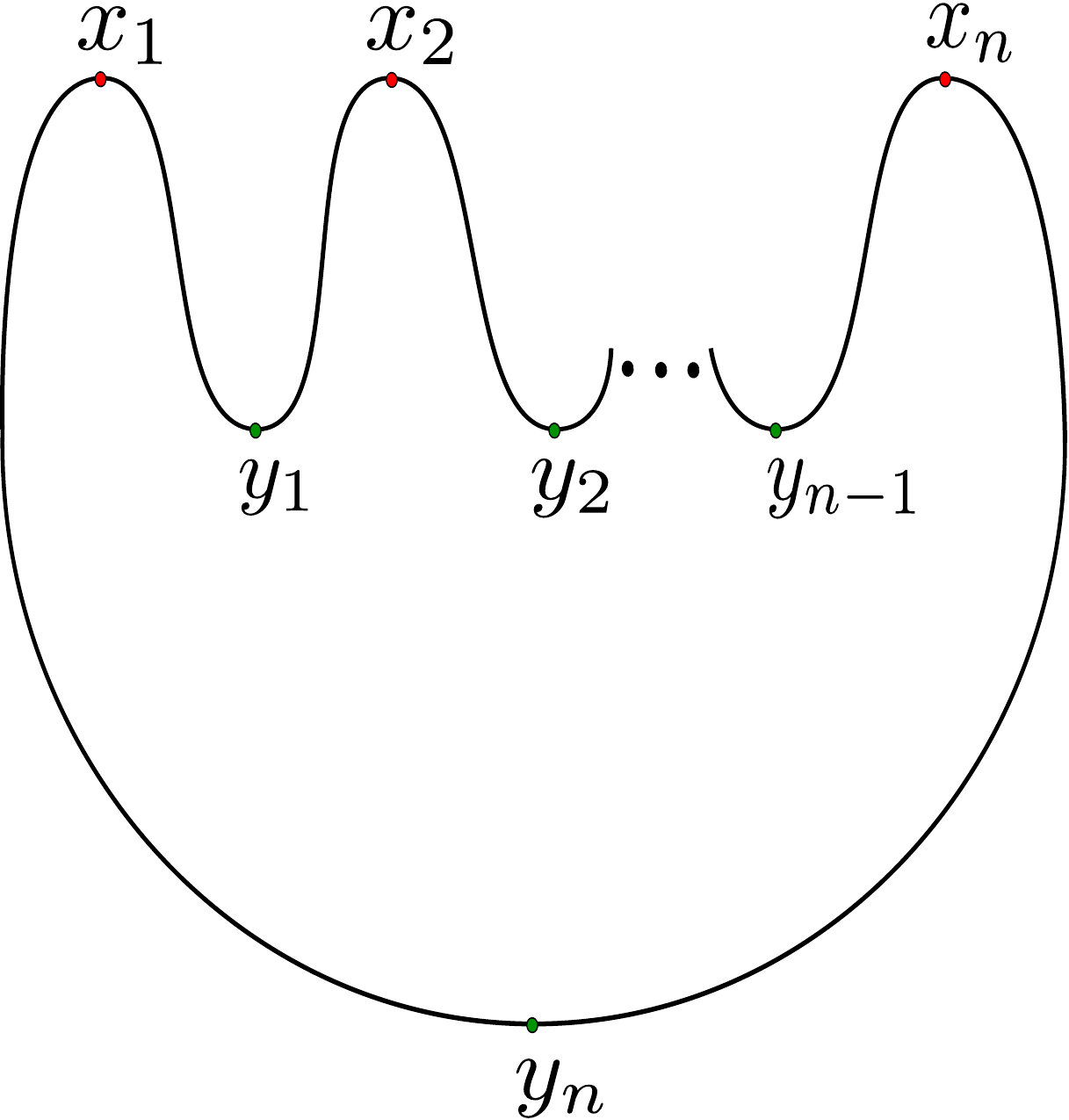}
      \caption{$f_n\colon S^1 \to \R$, the "rock-and-roll" function}
      \label{fig:morses1}
\end{figure}

By varying $n$, this gives all possible Morse-Smale functions on $S^1$.  Fix also a generic compatible almost complex structure $J \in \mathcal{J}_{\omega}$.

The pearl differential on $\Z[H_2(S^2, S^1)]\langle \text{Crit} f_n\rangle$ is then given by
$$d(x_i) = y_{i-1} - y_i, \quad d(y_i) = (A-B)\sum_{j=1}^n x_j$$
where we used the convention $y_0 = y_n$.  To understand the shape of $d(y_i)$, note that, given any two points on $S^1$ (e.g. $y_i$ and $x_j$) and any generic $J$, there is one simple pseudoholomorphic disc in the class $A$ and minus one in the class $B$, going through $x_j$ and $y_i$, counted with appropriate signs.

Given $\varphi\colon H_2(S^2, S^1) \to \mathbb{F}^{\times}, \; A \mapsto z_1, \; B \mapsto z_2$, we get an induced representation $\varphi\colon \Z[H_2(S^2, S^1)]\to \mathbb{F}$ and $S^1$ is $\varphi$-narrow if and only if $r=z_1 -z_2\neq 0$, since $d(y_i)=r\sum x_j$.

A quick check shows that bases $b_0$ and $b_1$ for $d(\text{Crit}_1 f)$ and $d(\text{Crit}_0 f)$ are given by
$$b_0 = \{ y_i - y_{i+1} \; | \; i=1, \dots, n-1\}, \quad b_1= \sum_{i=1}^n x_i.$$

Moreover, splittings of (\ref{eq:split}) can be chosen as
\begin{align*}
s\colon b_0  &\to C_1  & s\colon b_1 &\to C_0\\
y_i - y_{i+1}  &\mapsto x_{i+1} & \sum x_i &\mapsto \frac{y_1}{r}
\end{align*}
so that
$$
\begin{blockarray}{ccccccc}
                    & s(\sum x_i)     & (y_1-y_2)    & (y_2-y_3)& \cdots    & (y_{n-2} - y_{n-1})& (y_{n-1}-y_n)\\ 
 \begin{block}{c(c|ccccc)}
   y_1              & \frac{1}{r}  &  1           &   0       &  \cdots &0 & 0 \\ 
   \cline{2-7} 
   y_2              &    0        &   -1     &   1             &  \\  
   y_3              &    0        &          &   -1            & 1 \\
   \vdots           &    \vdots   &  &                & \ddots & \ddots\\
   y_{n-1}          & & & & & -1 & 1\\
   y_n              &    0        &        &          &      &  & -1\\
  \end{block}
\end{blockarray}= s(b_1) b_0 / c_0
$$
$$
\begin{blockarray}{ccccc}
                    & s(y_1-y_2)  & \cdots & s(y_{n-1}-y_n) & \sum x_i\\ 
 \begin{block}{c(ccc|c)}
   x_1              & 0           & \cdots &   0            &  1\\ 
   \cline{2-5} 
   x_2              &             &        &                &  1\\  
   \vdots           &             &\text{\Huge{I}}&                &  \vdots\\
   x_n              &             &        &                &  1\\
  \end{block}
\end{blockarray}= s(b_0) b_1 / c_1
$$

Finally, we arrive at 
$$\tau(S^1, \varphi) = \dfrac{1}{r} = \dfrac{1}{z_1 - z_2} \in \K(\F).$$  Note that this holds for any generic triple defining the pearl complex, as any Morse function on $S^1$ is like $f_n$ for some $n$, hence $\tau$ does not depend on the choice of generic data, a fact we knew already.

If $\varphi$ is induced from a representation  $\psi\colon H_1(S^1) \to \mathbb{F}^{\times}, \; 1 \mapsto z\neq 1,$ via $\varphi = \partial \circ \psi$, where $\partial\colon H_2(S^2, S^1) \to H_1(S^1)$ is the connecting morphism, then 
$$\tau(S^1, \varphi)= \dfrac{1}{z-\frac{1}{z}} = \dfrac{z}{z^2 -1} = \dfrac{z}{(z-1)(z+1)}$$
In analogy with the classical notion of Reidemeister torsion (or R-torsion), where one quotients $\K(\F)$ by $\varphi(H_2(S^2, S^1))$, we have:
$$\tau(S^1, \varphi) = \dfrac{1}{z^2-1} \in \K(\F) / \pm \varphi(H_2(S^2, S^1)) = \K(\F) / \pm \psi(H_1(S^1)).$$
This does not coincide with the usual value of R-torsion for the circle: $\Delta_{\psi}(S^1) = \dfrac{1}{z-1} \in \mathbb{F}^{\times}/ \pm \psi(H_1(S^1))$ (see e.g. \cite[\S 8]{Mil:torsion}).  Note however that $\Delta_{\psi}(S^1)$, as a function of $z$, divides $\tau(S^1, \varphi)$ (compare with \cite[Theorem 1.12]{HutLee:torsion} or \cite[Corollary 2.3.4]{Lee:torsion1}).

\begin{remnonum}
If $\Sigma_g$ is an orientable surface of genus different than zero, closed or not, then any simple closed contractible curve is a monotone Lagrangian $L$.  Moreover, it bounds only one holomorphic disc of Maslov index two (by assumption on the genus) and, using arguments similar to the ones above, we get:
$\tau(S^1, \varphi) = 1 \in \mathbb{F}^{\times}/ \varphi(H_2(\Sigma_g, S^1))$ and $\Delta_{\psi}(S^1)$, as a function of $z$, does not divide $1$!
\end{remnonum}

\subsection{Narrow $S^1 \times S^2$}\label{sec:s1s2}
In this section, we assume that $L=S^1\times S^2$ is monotone, and we fix pearl data $\mathcal{D}$ given by a perfect Morse function on $L$.  Fix also a narrow representation $\varphi\colon H_2(M, S^1\times S^2) \to \F^\times$.  In this case the pearl complex is $H_*(L) = \mathbb{F}\langle ab, a, b, L \rangle$, where $\deg a=1, \deg b=2$ and $ab$ denotes the intersection product.  Notice that Theorem \ref{thm:buhovsky} does not apply to $L$.  Nevertheless, it turns out that $L$ must be $E^{1, \varphi}$-narrow, see \cite{Cha:3fold}.

This case is covered by Theorem \ref{thm:assumptionstar}.   We have $d_{1*}(b) = d(b) = r_\varphi L$ and $\tau_\varphi(L) = r_\varphi^{-2} \in \K(\F)$, $r_\varphi \neq 0$.  An explicit example where this happens is given in Oakley-Usher \cite[Proposition 8.2]{Oa-Ush:certainlags}, based on Biran's Lagrangian circle bundle construction \cite{Bi:Nonintersections}.  In this case, $L\subset \C P^3$  and there is a single holomorphic disc contributing to $r_\varphi$, therefore
$$\tau(S^1\times S^2, \varphi)\equiv 1 \in \mathbb{F}^\times / \pm\varphi(H_2(M, L))$$
In the light of Fukaya's symplectic s-cobordism conjecture in \S \ref{sec:fukayascob}, it is tempting to conjecture that this Lagrangian is displaceable by a Hamiltonian isotopy.

\subsection{Proof of Corollary \ref{cor:assumptionstar}}\label{sec:proofcorassumptionstar}
\subsubsection{The $n$-torus, $n \geq 2$}\label{sec:torus}
By Theorem \ref{thm:buhovsky}, monotone tori are $E^{1, \varphi}$-narrow for every narrow representation $\varphi$ and their minimal Maslov number is two.  In this section, we consider a monotone Lagrangian $n$-torus $L = S^1 \times \cdots \times S^1$ ($n$ times), $n \geq 2$ and we fix $\varphi \in \mathcal{N}(L, \mathbb{F})$ a narrow representation.

By Theorem \ref{thm:main}, it is enough to compute the torsion of the exact sequence
$$\xymatrix{0 \ar[r] & H_0(L) \ar[r]^{d_{1*}^\varphi} & H_{1}(L) \ar[r] & \cdots \ar[r] & H_{n-1}(L) \ar[r]^{d_{1*}^\varphi} & H_n(L) \ar[r] & 0}$$
The Morse homology ring of $L$ is generated by $n$ classes $x_i \in H_{n-1}(L;\mathbb{F})$.  The unit is denoted by $L \in H_{n}(L)$.  Since the above sequence is exact, we have that $d_{1*}^\varphi(x_i) = r_i L$, $r_i \in \mathbb{F}$, and at least one of the $r_i$ is not zero.  Moreover, a permutation of the $x_i$ will not change the value of torsion as an element of $\K(\F)$, hence we assume without loss of generality that $r_1 \neq 0$.   Note also that $x_1$ is the fundamental class of an $n-1$-torus $V$ embedded in $L$ such that $L=S^1\times V$, therefore $L$ satisfies Assumption $(\star)$.  This proves the second part of Corollary \ref{cor:assumptionstar}.

Since $\tau(L, \varphi)$, a second-order invariant, is always trivial for $\varphi$-narrow monotone tori, it would be interesting to know if higher order invariants can be defined and whether they are trivial or not for such tori.  Compare with \cite{Fu:scobordism} and the discussion in \S \ref{sec:fukayascob}, which conjectures that there should be no higher-order invariant.

\subsubsection{The product of a circle and an orientable genus $g$ surface, $g \geq 2$}\label{sec:3manifolds}
We consider monotone embeddings of $L=S^1 \times \Sigma_g$, where $\Sigma_g$ is a closed orientable surface of genus $g$, $g \geq 2$ (the case $g=1$ was treated in the previous example).

Fix $\varphi \in \mathcal{N}(L, \mathbb{F})$ a narrow representation.  By Theorem \ref{thm:buhovsky}, $L$ is $E^{1, \varphi}$-narrow and $N_L=2$.  As a ring, we have
$$H_*(L) = \dfrac{\Lambda_\mathbb{F}[\alpha_1, \beta_1, \dots, \alpha_g, \beta_g, z]}{\alpha_1 \beta_1 = \cdots = \alpha_g \beta_g, \; \alpha_i \beta_j = 0 \; (i \neq j)}$$
where $\Lambda_{\mathbb{F}}[x, y]$ denotes the free graded exterior algebra (over $\mathbb{F}$) generated by $x,y$.  By exterior algebra, we mean here that $xy = (-1)^{(n-|x|)(n-|y|)} yx$, since we use the homology product.  Moreover, we have $\deg \alpha_i = \deg \beta_j = \deg z = 2$.

Set $d_{1*}^\varphi(\alpha_i) = a_i L, \; d_{1*}^\varphi(\beta_i) = b_i L, \; d_{1*}^\varphi(z) = c L$, where $a_i, b_j, c \in \mathbb{F}$.  Notice that, for $i\neq j$ (which is allowed since $g\geq 2$), we have $0 = d_{1*}^\varphi(\alpha_i \beta_j) = a_i \beta_j - b_j \alpha_i$, therefore $a_i = b_j = 0$ for all $i, j$.  Since $L$ is $\varphi$-narrow, the only possibility is that $c \neq 0$.  The following table determines $d_{1*}^\varphi$ completely:\\
\begin{center}
\begin{tabular}{|c|c|}
\hline
 $x \in H_k(L)$ & $d_{1*}^\varphi(x)$\\
\hline
$L$ & 0\\
\hline
$\alpha_i, \; \beta_i$ & 0\\
$z$ & $cL, \; c \neq 0$\\
\hline
$\alpha_1 \beta_1$ & 0\\
$\alpha_i z$ & $-c \alpha_i$\\
$\beta_i z$ & $-c \beta_i$\\
\hline
$\alpha_1 \beta_1 z$ & $c \alpha_1 \beta_1$\\
\hline
\end{tabular}
\end{center}
Notice that $z = * \times [\Sigma_g]$ and $c= \sum_A GW_{0,2}^A(c, pt)\varphi(A)$, therefore $\varphi$ satisfies Assumption $(\star)$ and, by Theorem \ref{thm:assumptionstar}, we have
$$\tau(L, \varphi) = c^{2(g-1)} = c^{-\chi(\Sigma_g)} \in \K(\F).$$
Of course, determining the exact value of $c$ depends on $M$ and on the Lagrangian embedding of $L$.

\subsubsection{Split Lagrangian embeddings $S^1 \times V \subset S^2\times X$, where $H_2(X, V) = 0$}\label{sec:splitlag}
We consider the monotone embedding of $L=S^1 \times V$ in $(M, \omega) = (S^2 \times X, \omega_{S^2}\oplus \omega_X)$ given by the product Lagrangian embeddings of $S^1$ in $S^2$ with a given Lagrangian embedding of $V \subset X$, where $V$ is closed and $H_2(X, V) = 0$.  We will use the notations from \S \ref{sec:circle}.  

Note that $H_2(M, L) \cong H_2(S^2, S^1)\oplus H_2(X, V)$ and the morphisms $\omega, \mu$ respect the splitting; moreover, $N_L=2$.  Choosing a (regular!) split compatible almost complex structure, we see that the only pseudoholomorphic discs with boundary on $L$ are pairs $(u, pt)$, where $u\colon(D^2, S^1)\to (S^2, S^1)$ is pseudoholomorphic and $pt$ is a constant disc in the pair $(X, V)$.  Therefore, 
$$d_{1*}(* \times [V]) = (A-B) L$$
where $(A-B) \in \Z[H_2(M, L)]$ and $A, B$ are the generators of $H_2(S^2, S^1)$.  Hence $L$ is $E^1$-narrow, since it is $\varphi$-narrow as soon as $S^1 \subset S^2$ is. As in \S \ref{sec:circle}, set $0 \neq r_\varphi = \varphi(A-B) \in \mathbb{F}^\times$.  We have $d_{1*}(* \times [V]) = r_\varphi L $ and $L$ satisfies Assumption $(\star)$.  Finally, using Theorem \ref{thm:assumptionstar}, we get
$$\tau(L, \varphi) = r_\varphi^{-\chi(V; \F)} \in \K(\F).$$
By using the computations from \S \ref{sec:circle}, we see that this torsion is not trivial as an element of either $\K(\F)$ or $\mathbb{F}^\times/ \varphi(H_2(M, L))$, provided that $\chi(V;\F) \neq 0$.  This completes the proof of Corollary \ref{cor:assumptionstar}.

\bibliographystyle{alpha}
\bibliography{../../../biblio_latex/bibliography}
\let\thefootnote\relax\footnote{\noindent This paper is dedicated to the memory of Kevin Henriot.\\
Part of this research was done during my stay at the ETH Z\"urich in 2014.  There, I benefited greatly from numerous discussions with Paul Biran, which helped shape the direction of my research and led me to the results of this paper.  I thank him for his time and interest in this project.  I also thank the Max Planck Institute for Mathematics in Bonn for the great working environment it provides and the financial support.  The idea of using spectral sequences to compute torsion was inspired by a talk of Thomas Kragh at the CAST 2015 workshop in Lyon;  our proof of Theorem \ref{thm:main} is a generalization of Lemma 2.1 from Abouzaid-Kragh \cite{Abou-Kragh:simplehomtypenearby}. I would like to thank the organizers for such a lively conference.  Thanks finally to Saurav Bhaumik for the algebraic discussions related to \S \ref{sec:whiteheadtorsion}.}
\end{document}